\documentclass[12pt]{amsart}

 \usepackage[T1]{fontenc}
\usepackage{amsmath,amssymb,amsthm}
\usepackage{mathtools}
\usepackage{graphicx}
\usepackage{geometry}
\usepackage{caption}
\usepackage{booktabs}
\usepackage{subcaption}
\usepackage{listings}
\usepackage{graphicx}
\usepackage{verbatim}
\usepackage{harvard}
\usepackage{hyperref}
\usepackage[main=english, polutonikogreek]{babel}
\newcommand{\pvn}{\par\vspace{.7ex}\noindent}
\newcommand{\pv}{\par\vspace{.7ex}}

\newcommand{\N}{{\mathbb{N}}}

\newcommand{\R}{{\mathbb{R}}}

\newcommand{\F}{{\mathbb{F}}}

\newtheorem{theorem}{Theorem}[section]

\theoremstyle{definition}
\newtheorem{definition}[theorem]{Definition}

\theoremstyle{remark}

\newcommand\tab[1][1cm]{\hspace*{#1}}
\title{Bridging Classical and Modern Approaches to Thales' Theorem}
\author{Piotr Błaszczyk, Anna Petiurenko}

\begin{document}
\maketitle
	\begin{abstract}
In this paper, we reconstruct Euclid's theory of similar triangles, as developed in Book VI of the \textit{Elements}, along with its 20th-century counterparts, formulated within the systems of Hilbert, Birkhoff, Borsuk and Szmielew, Millman and Parker, as well as Hartshorne. In the final sections, we present recent developments concerning non-Archimedean fields and mechanized proofs.

Thales' theorem (VI.2) serves as the reference point in our comparisons. It forms the basis of Euclid's system and follows from VI.1—the only proposition within the theory of similar triangles that explicitly applies the definition of proportion.

 Instead of the ancient proportion, modern systems adopt the arithmetic of line segments or real numbers. Accordingly, they adopt other propositions from Euclid's Book VI, such as VI.4, VI.6, or VI.9, as a basis.

In §\,10, we present a system that, while meeting modern criteria of rigor, reconstructs Euclid's theory and mimics its deductive structure, beginning with VI.1. This system extends to automated proofs of Euclid's propositions from Book VI.

Systems relying on real numbers provide the foundation for trigonometry as applied in modern mathematics. In §\,9, we prove Thales' theorem in geometry over the hyperreal numbers. Just as Hilbert managed to prove Thales' theorem without referencing the Archimedean axiom, so do we by applying the arithmetic of the non-Archimedean field of hyperreal numbers.
	\end{abstract}
	
	\noindent \textbf{Keywords:} 		Thales' theorem, 20th-century foundations of geometry, the \textit{Elements}, mechanical proofs

 \tableofcontents
\section{Interpretating Thales' Theorem and Euclidean Proportion}
1. Thales' theorem, also known as the intercept theorem or the fundamental theorem of proportionality, plays a central role in Euclid's theory of similar figures, as developed in Book VI of the \textit{Elements} \cite{fitz2008}. The first proposition of this book, VI.1, states that triangles with the same height are to each other as their bases. 
Proposition VI.2 is what has been referred to as Thales' theorem since the 19th century. Propositions VI.4–7 outline the criteria for similar triangles, with VI.4 stating that in equiangular triangles,  corresponding sides are proportional.

In modern reconstructions of Euclid's theory of similar figures, Proposition VI.4 is crucial because it forms the foundation of trigonometry. However, for the reasons explained below, modern geometry has abandoned the ancient concept of proportion. Therefore, the general objective is to reestablish VI.4 on grounds independent of Euclidean proportion as defined in Book V of the \textit{Elements}.

\pvn 2. Euclidean proportion is the most significant ancient Greek theory transmitted to early modern mathematics. In contrast to Euclidean rigor, early modern mathematicians applied it in unorthodox ways. Although governed by the Archimedean axiom, the theory was applied to both standard and infinitesimal triangles.

It laid the foundations for early modern optics, mechanics, and the advancements of 17th-century calculus. Viewed from the perspective of mathematical techniques, Newton's \textit{Principia} represents a synthesis of Euclidean proportion and infinitesimals.

In contemporary mathematics, trigonometry encodes the Euclidean theory of similar triangles. Contrary to the widespread view advanced by 20th-century structuralism in the philosophy of mathematics, trigonometry is a part of modern calculus that does not derive solely from the axioms of real numbers. 

\pvn 3. Euclidean proportion and calculus converge in determining 
 the derivative of $\sin x$. To this end, the following trigonometric identity is essential:
\[\sin(x+h)-\sin x= 2\sin\frac{h}2\cos (x+\frac{h}2), \] 
which rests on Euclidean principles of similar triangles.

Furthermore, evaluating the limit 
\[\lim\limits_{h\rightarrow 0}\frac{\sin h}{h}\] 
also necessitates reference to Euclidean geometry.

Moreover, trigonometry, as applied in calculus, requires the radian measure of angles, which, in turn, relies on the final proposition of Euclid's Book VI. 

Based on these foundations, calculus techniques, such as derivatives and power series, are applied to trigonometric functions \cite{blaszczyk2025}.

\pvn 4. Nowadays, Euclidean proportion, in a more accessible form, is commonly encountered in school mathematics or historical contexts. It is typically expressed using fractions rather than its original  formulation. 
 In fact, the foundational achievement of Descartes' \textit{La Géométrie} (1637) \cite{descartes1637} was the transformation of proportion into the arithmetic of line segments through the implicit rule \cite{blaszczyk2024}:
\[a:b::c:d\Rightarrow a=b\cdot {\frac cd}.\]

In a definition of sorts, Descartes introduced the product and division of line segments on the very first pages of his essay through a diagram: given $AB=1$, the line $EB$ is the product of $DB$ and $CB$; similarly, $CB$ is the result of dividing $EB$ by $DB$; see Fig. \ref{fig0}.  

Obviously, these definitions are not rigorous by modern standards.  Rather, they can be interpreted as an application of Thales' theorem:  instead of the proportion
$DB:1::EB:CB$, Descartes introduces a novel operations, namely $EB=DB\cdot CD$ and 
$BC=\frac{EB}{DB}$.  

Throughout \textit{La Géométrie}, these operations, along with the addition of line segments, satisfy the laws of an ordered field. These rules were applied in mathematics implicitly until the end of the 19th century when Hilbert introduced the axioms of an ordered field \cite{ref_DH99,hilbert1900}.

Thus, alongside trigonometry -- presented either in elementary form or as power series -- Euclidean proportion resonates in modern calculus through the laws of an ordered field.

\begin{figure}[!ht] 
\centering
\includegraphics[scale=0.75]{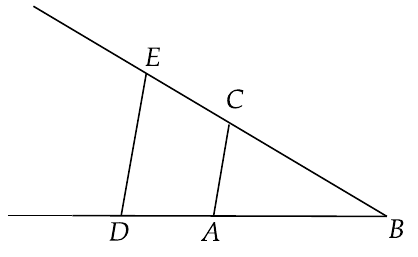}
\includegraphics[scale=0.75]{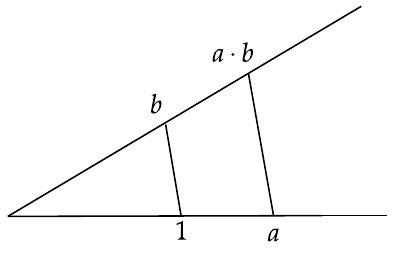}
\caption{\textit{La G{\'e}om}{\'e}trie, p. 298 (left). An interpretation of Descartes' definition (right)} \label{fig0}
\end{figure}

\pvn 4. Moritz Pasch's \textit{Vorlesungen {\"u}ber Neuere Geometrie} (1882) \cite{pasch1882} initiated the process of establishing Euclidean geometry on new foundations. Hilbert's\textit{ Grundlagen der Geometrie} and other 20th-century systems that followed also sought to reconstruct the Euclidean theory of similar figures within these new frameworks.

The ancient concept of proportion was abandoned, giving rise to two general strategies: one based on the arithmetic of line segments and the other on the properties of real numbers, understood as an ordered field with completeness. 

The arithmetic of line segments is inspired by Descartes' arithmetic. Indeed, instead of Descartes' arbitrary angle $\angle DBE$, one can use a right angle while adopting Descartes' definitions; see Fig. \ref{Des1} or \ref{hilbert1}. In this formulation, the rules of an ordered field can be justified within Euclidean proportion; see \S\,3 below. 

However, while Descartes introduced ordered field arithmetic based on the ancient concept of proportion, namely, 
 \[ a\cdot d=c\cdot b \Leftrightarrow_{df}  a:b::c:d,\]
 modern geometry seeks to replicate ancient proportions through the laws of an ordered field. In contemporary frameworks, Euclidean proportion is recovered as the product of line segments:
\[a:b=c:d \Leftrightarrow_{df} a\cdot d=c\cdot b,\]
or within the arithmetic of real numbers:
  \[a:b=c:d \Leftrightarrow_{df} \frac ab=\frac cd.\]

 Below,  we examine variations of the first approach by discussing the systems of Hilbert and Hartshorne in \S\S\,4 and 5, respectively. 
 In this approach, as in Euclid and Descartes, $a, b, c, d$ stand for line segments.

The second strategy relies on real numbers and considers relationships between the lengths of line segments, that is, real numbers assigned to line segments based on axioms or sophisticated arguments. 

Geometry developed in mainstream mathematics interprets Euclidean geometry within the so-called Euclidean spaces $\mathbb R^n$, with $\mathbb R^2$  serving as the model example of the Euclidean plane.    This approach either incorporates a form of completeness for real numbers into the axioms of geometry or assumes a bijection between a geometric line and the real numbers. 
To be clear, this approach seeks to justify, through foundational studies, what really happened when 20th-century mathematics established its foundations on real numbers.

Below, we address real-numbers approaches by discussing the systems of Birkhoff  and Millman–Parker in \S\S\,7 and 8, respectively. In \S\,6, we discuss the system of Borsuk–Szmielew, which forms a bridge between  synthetic, Hilbert-style geometry and geometry based on real numbers. 

Euclid's theory of proportion relies on the Archimedean axiom, which is explicitly included as Definition 4 in Book V. Hilbert's arithmetic of line segments does not reference the Archimedean axiom. On the other hand, approaches based on real numbers do rely on this axiom, as the real numbers form the largest Archimedean field. In \S\,9, we present a proof of Thales' theorem based on the arithmetic of hyperreal numbers, which constitute a non-Archimedean ordered field.
\section {Book VI of the  \textit{Elements}}
\subsection{Definition of proportion}
\pvn 1. Thales' theorem is connected to Euclidean proportion through Proposition VI.1 -- the only proposition in Book VI, except for the last one (VI.33), that explicitly references the definition of proportion,  Defintion 5, Book V. We interpret this definition using the following formula:
  \begin{enumerate}\itemsep 0mm
\item[] $a:b::c:d\Leftrightarrow_{df} (\forall{m,n\in\mathbb{N}})[(na>_1mb\Rightarrow nc>_2md)\wedge\\
 \wedge (na=mb\rightarrow nc=md) \wedge (na<_1mb\Rightarrow nc<_2md)]$.
\end{enumerate}

 Pairs $a, b$ and $c,d$ are to be of the same  \textit{kind}. This assumption  is formalized by $a,b\in{\mathfrak{M}_1}=(M_1,+,<_1)$, and $c,d\in{\mathfrak{M}_2}=(M_2,+,<_2)$, which means that magnitudes of the same kind can be added and compared in terms of greater-than relationship. In the context of the Proposition VI.I, $a, b$ are line segments, and $c, d$ are triangles, or parallelograms. Specifically, based on this foundational assumption, triangles, somehow, can be added and compared as \textit{greater} and \textit{lesser} \cite{blaszczyk2024}. 
 
 In Proposition VI.33, Euclid states the proportion between angles in a circle, on the one hand, and respective arcs, on the other.

\begin{theorem}
[{\textit{Elements}, VI.1} \cite{fitz2008}]
\textit{Let ABC and ACD be triangles, and  EC and CF parallelograms,  of the same height AC. I say that as base BC is to base CD, so triangle ABC (is) to triangle ACD, and parallelogram EC to parallelogram CF.}
\end{theorem}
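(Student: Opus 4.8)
The plan is to verify directly the Eudoxian definition of proportion recorded above, treating the triangles as magnitudes $c=\triangle ABC$, $d=\triangle ACD$ of a common kind $\mathfrak{M}_2$ and the bases as segments $a=BC$, $b=CD$ of kind $\mathfrak{M}_1$. Thus I must show that for every pair $m,n\in\mathbb{N}$ the three implications relating $n\cdot a$ versus $m\cdot b$ to $n\cdot c$ versus $m\cdot d$ hold simultaneously, which is exactly what the displayed definition demands.

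First I would form the equimultiples geometrically. Extending the base line through $B$, I lay off $n-1$ further segments equal to $BC$ and join each division point to the apex $A$; by Elements I.38 (triangles on equal bases and in the same parallels are equal) these $n$ triangles are all equal to $\triangle ABC$, so the large triangle on the base $n\cdot BC$ with apex $A$ is exactly $n\cdot\triangle ABC$. Symmetrically, laying off $m$ copies of $CD$ on the other side and joining them to $A$ produces a triangle on base $m\cdot CD$ equal to $m\cdot\triangle ACD$. Crucially, both large triangles share the apex $A$ and lie in the same parallels, i.e. they retain the common height $AC$.

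The heart of the argument is the monotonicity of triangle-magnitude in the base at fixed height: among triangles in the same parallels, the one on the greater base is the greater magnitude, equal bases give equal triangles, and the lesser base gives the lesser triangle. This is I.38 together with its strict refinement supplied by the common notions on adding and subtracting equal magnitudes. Applying it to the two large triangles on bases $n\cdot BC$ and $m\cdot CD$ translates each of the three cases $n\cdot BC \gtreqless m\cdot CD$ directly into the corresponding case $n\cdot\triangle ABC \gtreqless m\cdot\triangle ACD$. Since $m,n$ were arbitrary, the definition of proportion is satisfied and $BC:CD::\triangle ABC:\triangle ACD$.

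Finally, to bring in the parallelograms, I would invoke I.41: a parallelogram on the same base and in the same parallels as a triangle is its double, so $EC=2\cdot\triangle ABC$ and $CF=2\cdot\triangle ACD$. By V.15, equimultiples preserve a ratio, whence $\triangle ABC:\triangle ACD::EC:CF$; combining this with the proportion already established and the transitivity of proportion (V.11) yields $BC:CD::EC:CF$. I expect the main obstacle to be the monotonicity step: everything else is the formal unwinding of the Eudoxian definition and the bookkeeping of equimultiples, but the claim that triangle-magnitude at fixed height is strictly monotone in the base, and that it adds when bases are juxtaposed, is precisely the geometric content that makes the three trichotomy clauses align, and it is what silently encodes the additive, comparable structure of $\mathfrak{M}_2$ presupposed for triangles.
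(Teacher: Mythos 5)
Your proposal is correct and follows essentially the same route as the paper's reconstruction of Euclid's proof: form equimultiples of the bases geometrically, observe via I.37/I.38 that the triangles on the multiplied bases are the corresponding multiples of the original triangles, and then read off the Eudoxian definition from the trichotomy of bases versus triangles. The differences are ones of completeness rather than of method: you verify the definition for arbitrary $m,n$, whereas the paper (following Euclid) takes the specific equimultiples $n=m=3$ and notes this explicitly; you also discharge the parallelogram clause via I.41 together with V.15 and V.11, which the paper's proof omits entirely. Your closing diagnosis — that the unproven kernel is the strict monotonicity of triangle-magnitude in the base at fixed height — is precisely the point the paper raises immediately after its proof, where it observes that the \textit{Elements} never justify why equality or inequality of bases transfers to equality or inequality of the triangles erected on them; so your identification of the ``main obstacle'' matches the paper's own critical assessment of Euclid's argument.
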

\begin{figure}[!ht] 
\centering
\includegraphics[scale=0.8]{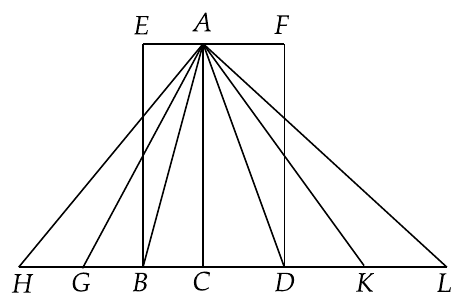}
\caption{\textit{Elements}, VI.1} \label{fig71}
\end{figure}
\begin{proof} 
By  construction:
$BC=GB=HG$ and $CD=DK=KL.$ Then, by I.37, the equality of triangles  holds: 
$\triangle AHC=3\triangle ABC$, and $\triangle ALC=3\triangle ADC$; see Fig. \ref{fig71}

Finally, Euclid applies these data to the definition of proportion:
\begin{equation}\label{VI1}3\triangle ABC \gtreqqless 3 \triangle ADC\Rightarrow  3BC \gtreqqless 3 DC.\end{equation}

 This formula interprets Euclid's words: ``And if base HC is equal to base CL, then triangle AHC is also equal to triangle ACL. And if base HC exceeds base CL, then triangle AHC also exceeds triangle ACL. And if less (than) less.`` 

We express this as:
\[\triangle ABC: \triangle ADC::BC:DC.\]

Note that, instead of \textit{equal multiples} ($na, nc, mb, md$) referred to in Definition V.5, Euclid sets $n=m=3$.
\end{proof}
\pvn 2. The \textit{Elements} do not explicitly justify the reasoning behind why the triangles are equal, greater, or lesser, i.e., (\ref{VI1}). Euclid assumes that the equality or inequality of line segments directly translates to the equality or inequality of the triangles formed on these segments.  Indeed, arguments relying on areas play a role in the theory of similar figures. Modern systems, however, seek to eliminate them.

On the other hand, some crucial results in Book VI concern the relationship between the areas of similar figures, yet modern systems do not recover them.

In Proposition VI.33, Euclid states the proportion between angles in a circle 
(inscribed or central) and their corresponding arcs. The proof similarly applies to triples of angles and triples of arcs; see Fig. \ref{figVI33}. 

Modern geometers, except for Birkhoff, do not address this issue at all, but it still echoes in calculus.

\subsection{The role of VI.2 in Euclid's system and modern mathematics}
\pvn 1. Thales' theorem forms the foundation of the theory of similar figures, which is developed in the subsequent propositions of Book VI of the \textit{Elements}.

Proposition VI.3 states that in a triangle, the bisector of an angle divides the opposite side into segments proportional to the other two sides.

Proposition VI.4 establishes that equiangular triangles are similar, meaning the sides about equal angles are proportional.

The following propositions introduce criteria for similar triangles analogous to the congruence criteria: Side-Side -Side, Side-Angle-Side and Side-Angle-Angle:

VI.5: Triangles with proportional sides are equiangular.

VI.6: If two triangles have equal angles and the sides about these angles are proportional, then the triangles are equiangular, and their corresponding sides are proportional.

VI.7: If two triangles have equal angles and the sides about another pair of equal angles are proportional, then the triangles are similar, provided these angles are either both less than $\pi/2$
 or both greater than or equal to $\pi/2$.

The next two theorems are the most well-known consequences of Thales' Theorem that are not directly related to trigonometry.

VI.8: In a right-angled triangle, the altitude dropped from the right angle divides the triangle into two smaller triangles, each similar to the original triangle and to each other.

VI.9 shows how to divide a line segment 
$AB$ into $n$ equal parts. The procedure is as follows: 
 Let lines 
$AB$ and $AC$ form an angle. Along the arm $AC$, place the same arbitrary segment $n$ times. Connect the endpoint of the last segment to 
$B$. Then, draw $n$  lines parallel to this segment. 
These parallel lines will intersect
$AB$,  dividing it into $n$  equal parts.

In Proposition VI.12, Euclid demonstrates how to find the so-called fourth proportional line segment.

\pv Moreover, Proposition VI.15 forms the basis for the modern formula for the area of a triangle $\frac 12 ab\sin \alpha$, where $\alpha$ is an angle between sides $a$  and $b$. 

Proposition VI.19 introduces the formula -- in modern terms -- stating that the areas of similar triangles are proportional to the square of the similarity scale, while VI.20 extends this result to similar polygons. 

Proposition VI.31 establishes the addition of similar figures. Since the Pythagorean Theorem (\textit{Elements}, I.47) enables the addition of squares, and all squares are similar, we view VI.31 as a generalization of I.47.
\pvn 2.  In discussing modern attempts to prove Thales' theorem, we will show that some of the above theorems are assumed in advance, either as axioms or propositions based on geometric or analytic grounds. 

Specifically, Hilbert proves VI.4, and then, with the use of the arithmetic of line segments, proves VI.2.

Birkhoff adopts VI.6 as an axiom, and then, with the use of the arithmetic of reals numbers, VI.2 follows easily.

Borsuk and Szmielew, as well as Millman and Parker, prove VI.9, and then, using the results concerning measures  (Borsuk and Szmielew) or arithmetic of real numbers (Millman and Parker), they can prove VI.2.

In \S\,10, we present a system in which VI.1 is an axiom and show that this system allows us to prove VI.2 -- indeed, all propositions of Book VI (except VI.33) -- in an Euclidean fashion. This system is also related to a method for automated proofs of propositions from Book VI.

\subsection{Proposition VI.33}
\pvn 1. In Proposition VI.33, Euclid shows that: \textit{In equal circles, angles have the same ratio as the circumferences on which they stand}. 

\begin{figure}
\centering
\includegraphics[scale=0.8]{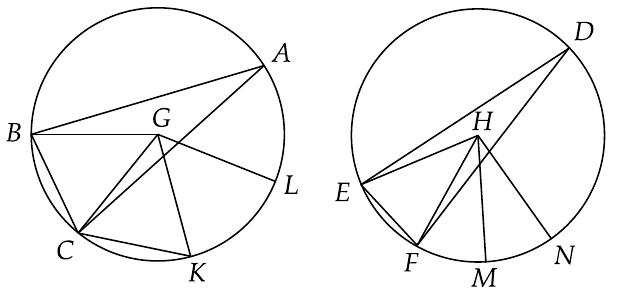}
\caption{\textit{Elements}, VI.33} \label{figVI33}
\end{figure}

The accompanying diagram (see Fig. \ref{figVI33}) represents various magnitudes referred to in the proposition: the angle $\angle BGC$, the sector of the circle $sec\,BGC$, the $arc\,BC$, as well as
the triangle $\triangle BGC$, and the segment $BC$ --  Ptolemy called it 
the chord, and we call it the sine of the angle $\angle BGC$.

With this notation, Euclid's proposition can be phrased as follows:
\[\angle BGC :: \angle EHF ::  arc\,BC: arc\,EF.\]

Since circles are equal, angles and arcs can be taken in the same circle.
Indeed, in modern mathematics, this is typically the unit circle.

\pvn 2. Establishing the relationships between $arc\,BC$ and  
the sine $BC$ was one of the most difficult problems in the history of mathematics.


Ptolemy managed to determine the relationship between the ratio of two arcs and the ratio of two sines. In accordance with Euclidean theory, he compared the ratio of two arcs, which are magnitudes of one kind, with the ratio of two line segments (sines), which are magnitudes of a different kind. This result laid the foundation for tables of chords, which had been in use from antiquity until modern times.

Given that sine $=x$ and arc $=z$  correspond to the angle $\alpha$, Newton determined the arc in terms of sine, that is, the series for $\arcsin x$, as well as the sine in terms of arc, that is, the series of $\sin z$.

Euler managed to combine the series for $\sin x$ and $\cos x$ with the exponential function $e^{ix}$, where $x$ stands for an arc of the unit circle \cite{blaszczyk2023}. 

\pvn 3. In modern mathematics, the identification of the arc $arc\,BC$  and the angle $\angle BGC$  is established through the concept of radian measure. This identification is achieved as follows:
converting degrees $\alpha$ to radians $x$ is based on the formula:  
\[\frac{x}{2\pi} = \frac{\alpha}{360}.\]  

Assuming the unit circle, this formula corresponds to the relationship:  the length of the arc is to the circumference of the circle as the measure of the angle in degrees is to \( 360^\circ \).  That is the straightforward application of Proposition VI.33.

The length of the arc \( l \) corresponding to the angle \( \alpha \) is determined in a similar way:  
\[\frac{l}{2\pi} = \frac{\alpha}{360}.\]  

The area \( P \) of the sector of the circle corresponding to the angle \( \alpha \) is given by the formula:  
\[
\frac{P}{\pi} = \frac{\alpha}{360}.
\]

Hence, we obtain that the length of the arc is equal to the measure of the angle in radians, and the area of the circular sector corresponding to angle \( x \) is \( \frac{x}{2} \):  
\[
x = 2\pi \frac{\alpha}{360}, \quad l = 2\pi \frac{\alpha}{360}, \quad P = \pi \frac{\alpha}{360}.
\]  

\begin{figure}
\centering
\includegraphics[scale=0.8]{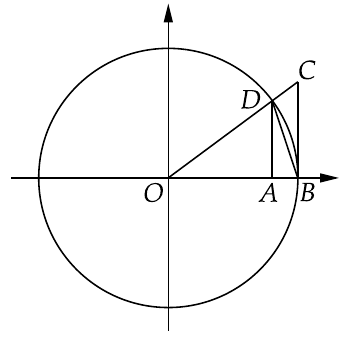}
\caption{Determining inequalities $\sin x < x < \tan x$.}\label{figSinus}
\end{figure}

\pvn 4. Here is how these   relate to determining the limit $\frac{\sin x}{x}$ at $0$.  

In  calculus, the inequalities  
\begin{equation}\label{sin}\sin x < x < \tan x \end{equation}  
are derived by comparing the areas of figures represented in Fig. \ref{figSinus}:  
\[
\text{area of } \triangle ODB < \text{area of the sector of circle } ODB < \text{area of } \triangle OCB.
\]  

 Substituting the formulas for the areas, we obtain  (\ref{sin}).

\subsection{Proving VI.2}

\pvn 1. Below, we reconstruct Euclid's proof of the Thales' theorem.

\begin{theorem}[\textit{Elements}, VI.2 \cite{fitz2008}] If some straight line is drawn parallel to one of the sides of a triangle, then it will cut the sides of the triangle proportionally. And if the sides of a triangle are cut proportionally then the straight line joining the cutting will be parallel to the remaining side of the triangle.
\end{theorem}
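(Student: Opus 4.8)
The plan is to follow Euclid's own strategy and reduce both implications to Proposition VI.1 by means of a single auxiliary construction that converts a proportion between segments into a comparison of triangle areas. Let $ABC$ be the triangle, with the cutting line meeting side $AB$ at $D$ and side $AC$ at $E$. First I would join $BE$ and $CD$; this produces two triangles, $\triangle BDE$ and $\triangle CDE$, that stand on the common base $DE$, and it is the equality of these two triangles that will carry the parallelism hypothesis through the argument.

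The decisive segment-to-area translation comes from VI.1. The triangles $\triangle BDE$ and $\triangle ADE$ have the common vertex $E$ and bases $BD$, $DA$ lying along the line $AB$, so they have the same height; hence VI.1 gives $BD : DA :: \triangle BDE : \triangle ADE$. Symmetrically, $\triangle CDE$ and $\triangle ADE$ share the vertex $D$, with bases $CE$, $EA$ along $AC$, so VI.1 gives $CE : EA :: \triangle CDE : \triangle ADE$. These two proportions are the backbone of both directions of the theorem.

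For the forward implication, the hypothesis $DE \parallel BC$ lets me invoke I.38: triangles on the same base $DE$ and between the same parallels $DE$ and $BC$ are equal, whence $\triangle BDE = \triangle CDE$. Feeding this into the two VI.1 proportions and using the Book~V principles that equal magnitudes bear the same ratio to a common magnitude (V.7) and that ratios which are the same as a given ratio are the same as one another (V.11), I would chain $BD : DA :: \triangle BDE : \triangle ADE :: \triangle CDE : \triangle ADE :: CE : EA$ and so conclude $BD : DA :: CE : EA$. The converse reverses exactly this chain: from $BD : DA :: CE : EA$ the two VI.1 proportions force $\triangle BDE : \triangle ADE :: \triangle CDE : \triangle ADE$, whence $\triangle BDE = \triangle CDE$ by V.9; since these equal triangles stand on the common base $DE$, I.39 returns the parallelism $DE \parallel BC$.

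The main obstacle is the lone area-theoretic step, namely the equality $\triangle BDE = \triangle CDE$, secured by I.38 in the forward direction and recovered by I.39 in the converse. This is precisely the point at which the parallel hypothesis enters and is extracted, and it is where the proof rests on the theory of areas rather than on proportion alone. As noted above, it is exactly this reliance that modern reconstructions seek to eliminate; within Euclid's framework, however, it is the lynchpin that binds VI.1 to VI.2, and everything else in the argument is routine manipulation of ratios under the Book~V definition of proportion.
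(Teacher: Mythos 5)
Your proof is correct and takes essentially the same route as the paper's reconstruction of Euclid's argument: joining $BE$ and $CD$, applying VI.1 twice to translate the segment ratios $BD:DA$ and $CE:EA$ into ratios of $\triangle BDE$, $\triangle CDE$ against $\triangle ADE$, chaining by V.7 and V.11 in the forward direction, and using V.9 followed by I.39 for the converse. The only flaw is a citation slip: the equality of triangles on the \emph{same} base $DE$ between the same parallels is Proposition I.37 (I.38 concerns \emph{equal} bases), which is what the paper invokes.
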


\begin{figure}[!ht] 
\centering
\includegraphics[scale=0.8]{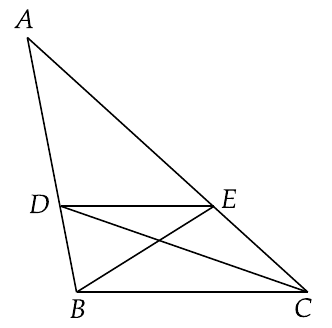}
\caption{\textit{Elements}, VI.2} \label{fig7}
\end{figure}

We present Euclid's proof in a schematized form, using modern symbolic conventions to enhance clarity. Specifically, we employ standard notations such as $\parallel$ to denote parallel lines. Additionally, we introduce specialized symbols, such as $\xrightarrow[I.38]{}$, where the arrow indicates a connective “for” rather than a formal logical implication, and the subscript “I.38” references Euclid’s Proposition I.38. 

The proof is as follows:
 \begin{eqnarray*}
 DE\| BC &\xrightarrow[I.37]{}&\triangle BDE=\triangle CDE\\
&\xrightarrow[V.7]{}& \triangle BDE:\triangle ADE:: \triangle CDE:\triangle ADE\\
 &\xrightarrow[VI.1]{}& \triangle BDE:\triangle ADE::BD:DA\\
&\xrightarrow[VI.1]{}&\triangle CDE:\triangle ADE::CE:EA\\
&\xrightarrow[V.11]{}&BD:DA::CE:EA.
\end{eqnarray*}

The second part goes like that.
  \begin{eqnarray*}
BD:DA::CE:EA, &&\\
BD:DA::  \triangle BDE:\triangle ADE,  &&\\
CE:EA::\triangle CDE:\triangle ADE&\xrightarrow[V.11]{}&\triangle BDE:\triangle ADE::\\
                                &&::\triangle CDE:\triangle ADE\\
&\xrightarrow[V.9]{}&\triangle BDE=\triangle CDE\\
&\xrightarrow[I.39]{}&DE\|BC.
 \end{eqnarray*}

 \begin{figure}[!ht] 
\centering
\includegraphics[scale=0.7]{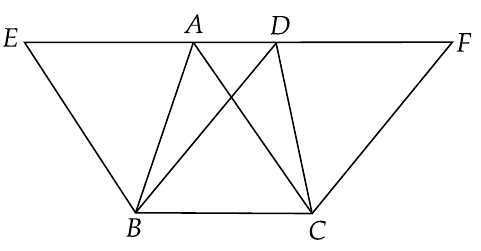}\includegraphics[scale=0.7]{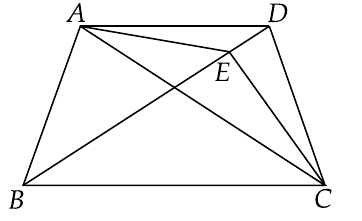}
 \caption{\textit{Elements} I.37 and I.39}\label{figE37}
\end{figure}
\hfill{$\Box$}

Propositions on equal figures referenced in this proof include the following (see Fig. \ref{figE37}):
\begin{theorem}[\textit{Elements}, I.37 \cite{fitz2008}]
    Triangles which are on the same base and between the same parallels are equal to one another.
\end{theorem}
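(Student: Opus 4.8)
The plan is to follow Euclid's classical route for I.37, transferring the equality from parallelograms (Proposition I.35) to triangles by means of the fact that a diagonal bisects a parallelogram (Proposition I.34). Let the two triangles be $\triangle ABC$ and $\triangle DBC$, sharing the base $BC$, with the apex vertices $A$ and $D$ lying on a common line $\ell$ parallel to $BC$; what is to be established is $\triangle ABC = \triangle DBC$, where ``$=$'' denotes equality of content in Euclid's sense, governed by the Common Notions.

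First I would complete each triangle to a parallelogram standing on the same base $BC$ and lying between the same parallels $\ell$ and $BC$. Extending $\ell$ to points $E$ and $F$, I draw $BE \parallel CA$ and $CF \parallel DB$, obtaining the parallelograms $EBCA$ and $DBCF$. Both stand on $BC$ and between the parallels $BC$ and $\ell = EF$, so Proposition I.35 yields their equality, $EBCA = DBCF$.

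Next I would descend from parallelograms back to triangles. By I.34 the diagonal $BA$ bisects the parallelogram $EBCA$, whence $\triangle ABC = \tfrac12\,(EBCA)$; likewise the diagonal $CD$ bisects $DBCF$, so $\triangle DBC = \tfrac12\,(DBCF)$. Invoking the Common Notion that halves of equal magnitudes are equal then gives $\triangle ABC = \triangle DBC$, which is the assertion.

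The routine part is the auxiliary construction together with the two applications of I.34; the substantive content sits upstream, in I.35 and, behind it, in the treatment of ``equal'' as \emph{equal content}, assembled from congruence and the additive Common Notions rather than from a numerical area function. Accordingly, the main obstacle I anticipate lies not in I.37 itself but in making precise this ambient notion of equality of figures, and in verifying that the completing parallelograms are legitimately formed between the same parallels regardless of the relative placement of $A$ and $D$ on $\ell$ — a point Euclid passes over, but which a modern reconstruction aiming to avoid area arguments must confront explicitly.
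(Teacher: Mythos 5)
Your proof is correct and is precisely Euclid's own argument for I.37: completing each triangle to a parallelogram on the base $BC$, applying I.35 to equate the parallelograms, and then halving via I.34 together with the common notion that halves of equals are equal. The paper itself states I.37 without proof, quoting it from the \textit{Elements} as a supporting proposition for the proof of VI.2, so your reconstruction coincides with the classical source argument the paper is implicitly relying on.
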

That is, 
\[AD\parallel BC\Rightarrow \triangle ABC=\triangle DBC.\]
\begin{theorem} [\textit{Elements}, I.39 \cite{fitz2008}]
    Equal triangles which are on the same base, and on the same side, are also between the same parallels.
\end{theorem}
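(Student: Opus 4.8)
The plan is to prove I.39 by \emph{reductio ad absurdum}, using only I.37 (just stated) together with Euclid's common notion that the whole is greater than the part. Let the two equal triangles be $\triangle ABC$ and $\triangle DBC$, sharing the base $BC$ and lying on the same side of it, and join $AD$. The goal is to show $AD \parallel BC$.

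First I would suppose the contrary, namely that $AD$ is \emph{not} parallel to $BC$. Through the vertex $A$ I draw the line parallel to $BC$; since $A$ and $D$ lie on the same side of $BC$, this parallel meets the line $BD$ at some point $E$, and because $AD$ was assumed not parallel to $BC$, we have $E \neq D$.

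Next, the triangles $\triangle EBC$ and $\triangle ABC$ stand on the common base $BC$ and between the same parallels (the base $BC$ and the line $AE$), so I.37 gives $\triangle EBC = \triangle ABC$. Combining this with the hypothesis $\triangle ABC = \triangle DBC$ yields $\triangle EBC = \triangle DBC$. But $E \neq D$ on the line $BD$ forces one of these triangles to be a proper part of the other, so by the common notion that the whole exceeds the part they cannot be equal — a contradiction. Since the same argument rules out any line through $A$ parallel to $BC$ other than $AD$ itself, it follows that $AD \parallel BC$.

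I expect the main obstacle to be configurational rather than computational: one must guarantee that the parallel to $BC$ through $A$ genuinely meets the line $BD$ at a point distinct from $D$, and this is precisely where the hypothesis ``on the same side'' is indispensable — omitting it permits a second parallel reflected across $BC$ and invalidates the conclusion. A secondary delicacy is the appeal to area comparison (``the whole is greater than the part''), the very device the excerpt notes that modern systems seek to eliminate; in a rigorous reconstruction this inequality would be replaced by the corresponding inequality in the arithmetic of segments or in a measure.
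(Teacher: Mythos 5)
Your proposal is correct: the paper itself states I.39 without proof, quoting it from the \textit{Elements} as one of the area-based propositions feeding into Euclid's proof of VI.2, and your \emph{reductio} — drawing the parallel through $A$, applying I.37 to get $\triangle EBC = \triangle ABC = \triangle DBC$, and contradicting the common notion that the whole exceeds the part — is precisely Euclid's own proof of I.39. Your closing observations are also well placed: the same-side hypothesis is exactly what forces $E$ onto the ray $BD$ so that one triangle is a proper part of the other, and the reliance on area comparison is the very feature of Euclid's argument that the paper notes modern systems seek to eliminate.
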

 That is, 
\[\triangle ABC=\triangle DBC \Rightarrow AD\parallel BC.\]

\pvn 2. In fact,  VI.2 consists of two propositions: VI.2a, which moves from parallelism to proportion, and VI.2b, which moves from proportion to parallelism.

In a more synthetic manner, supported by Fig. \ref{figVI2a}, the first part of Euclid's proof is as follows:
\[l\parallel p \xrightarrow[I.37]{} T_1=T_2 \xrightarrow[V.7]{} \frac{T_1}{T}=\frac{T_2}{T} \xrightarrow[VI.1]{}\frac{b}{a}=\frac{d}{c}.\]

And the second:
\[\frac{b}{a}=\frac{d}{c} \xrightarrow[VI.1]{} \frac{T_1}{T}=\frac{T_2}{T}\xrightarrow[V.9]{} T_1=T_2\xrightarrow[I.39]{} l\parallel p. \]

In these formulas, $T, T_1$, and $T_2$ represent triangles, and instead of the proportion $a:b::c:d$, we use the equality of fractions.

 \begin{figure}[!ht] 
\centering
\includegraphics[scale=0.8]{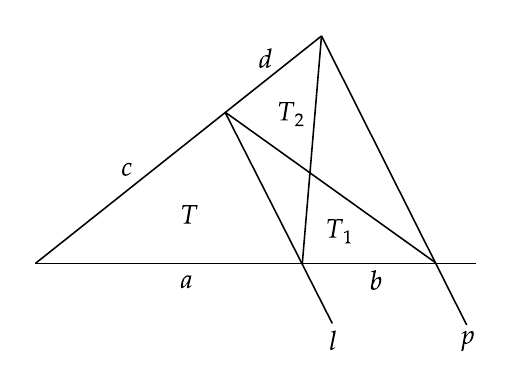}
 \caption{Proof of \textit{Elements}, VI.2 schematized}\label{figVI2a}
\end{figure}

These schemes emphasize the role of three sub-theories in Euclid's proof: the   theory of proportion, Proposition VI.1 (along with the definition of proportion), and the concept of parallel lines related to equal figures through Propositions I.37 and I.39.

\pvn 3. Comparing Euclid's approach with that of modern mathematics, the key issue lies in the ancient concept of proportion. In Euclid's system, it applies to triangles and line segments, while the crucial move relies on the relationship:  
\[\frac{T_1}{T}=\frac{T_2}{T} \Leftrightarrow\frac{b}{a}=\frac{d}{c}.\]

In contrast, Hilbert and Hartshorne reconstruct the proportion of line segments based on the arithmetic of line segments. Using geometric principles, they prove Euclid's Proposition VI.4.   Then, in proving VI.2a,  the assumption of parallel lines $l\parallel p$ implies that the respective triangles are
 equiangular, and, due to VI.4, the proportion $a:c=b:d$ holds; see Fig. \ref{figVI2b} (left).

 Borsuk and Szmielew, as well as Millman and Parker, derive VI.2a from VI.9 -- indeed, it can be proved without referring to Thales' theorem.

\pvn 4. Although the derivation of VI.2 in these systems differs, they enable us to prove VI.2b in the same manner:  Supposing that $\frac ab=\frac cd$ and that $l$ is not parallel to $p$, a line $q$ parallel to $l$ is introduced, which intersects a line segment $d'$. Due to VI.2a,  the proportion  $\frac ab=\frac c{d'}$ holds. By the arithmetic of line segments, it follows that $d=d'$,  leading to a contradiction; see Fig. \ref{figVI2b} (right).

\begin{figure}[!ht] 
\centering
\includegraphics[scale=0.8]{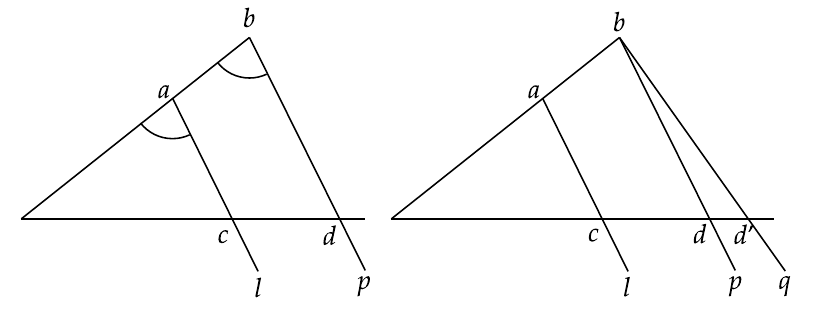}
 \caption{Modern proofs of VI.2a (left) and VI.2b (right) schematized.}\label{figVI2b}
\end{figure}

\pvn 5. In Birkhoff's system, the deductive structure of VI.2 is quite different. Birkhoff adopts VI.6 as an axiom, and from proportionality of sides $\frac ba= \frac dc$,  it follows that $b=ka$ and $d=kc$, where $k=\frac ba$. By VI.6, the respective triangles are equiangular, leading to the conclusion  $l\parallel p$, that is VI.2b; see Fig. \ref{figVI2b} (left).  

For VI.2a, suppose $l\parallel p$ and 
$\frac ba\neq \frac dc$. For some $d'$, the equality holds $\frac ba= \frac {d'}c$.\footnote{In Greek mathematics, this property is called the fourth proportional. It is employed implicitly in Euclid's Book V.} 
Then, $b=ka$ and $d'=kc$, where $k=\frac ba$, and the respective triangles are equiangular, which means $l\parallel q$,  contradicting Playfair Axiom; see Fig. \ref{figVI2b} (right).

After this overview of the techniques applied in proving Thales' theorem, we proceed to a more detailed presentation of specific approaches.

\begin{figure}[!ht] 
\centering
\includegraphics[scale=0.8]{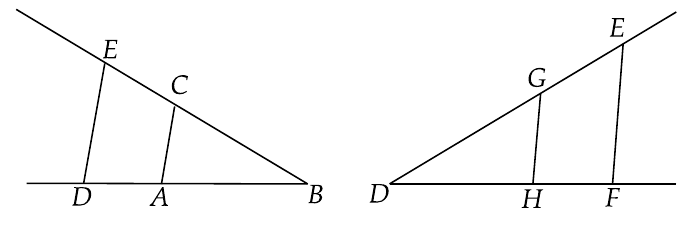}
\caption{Product of line segments: Descartes' \textit{La G{\'e}om}{\'e}trie, p. 298 (left), Euclid's \textit{Elements}, VI.12 (right).} \label{Des0}
\end{figure}

\section{Descartes' arithmetic}

\subsection{\textit{Elements}, Book V} 

Descartes' arithmetic of line segments is based on Proposition VI.12 of Euclid's \textit{Elements}. Descartes adopts a unit line segment $AB=1$ as well as an arbitrary angle $\angle DBE$; see Fig. \ref{Des0}. To simplify the setup, we assume $\angle DBE=\frac\pi 2$ and demonstrate that the rules of arithmetic can be justified by the laws of proportion developed in Book V of the \textit{Elements}.

Below, we include Propositions 7 to 25 of Book V. Although they are stylized in algebraic form, this modern formulation serves only to highlight the similarities between proportions and the arithmetic of fractions. Here, equality denotes equal figures \cite{ref_BP}.

\pv V.7 \tab $a=b\rightarrow a:c::b:c, \;\; a=b\Rightarrow c:a::c:b.$
\pv V.8 \tab$a>c\Rightarrow{a:d\succ c:d},\;\; a>c\Rightarrow d:c\succ d:a.$
\pv V.9 \tab$a:c::b:c\Rightarrow{a=b}.$
\pv V.10  \tab$a:c\succ b:c\Rightarrow{a>b},\;\;  c:b\succ c:a\Rightarrow{b<a}.$
\pv V.11 \tab$a:b::c:d,\; c:d::e:f\Rightarrow{a:b::e:f}$
\pv V.12 \tab$a:b::c:d,\; a:b::e:f\Rightarrow{a:b::(a+c+f):(b+d+f)}.$
\pv V.13 \tab$a:b::c:d,\ c:d\succ e:f\Rightarrow a:b\succ e:f.$
\pv V.14 \tab$a:b::c:d,\; a> c \Rightarrow b>d.$
\pv V.15 \tab$ a:b::na:nb.$
\pv V.16 \tab$a:b::c:d\Rightarrow{a:c::b:d}.$
\pv V.17\tab$(a+b):b::(c+d):d\Rightarrow{a:b::c:d}.$
\pv V.18  \tab$a:b::c:d\Rightarrow{(a+b):b::(c+d):d}.$
\pv V.19 \tab$(a+b):(c+d)::a:c \Rightarrow b:d::(a+b):(c+d).$
\pv V.22  \tab$a:b::d:e,\; b:c::e:f \Rightarrow a:c::d:f.$
\pv V.23 \tab$ (a:b::e:f, \; b:c::d:e) \Rightarrow a:c::d:f.$
\pv V.24 \tab$a:c::d:f,\; b:c::e:f \Rightarrow (a+b):c::(d+e):f.$
\pv V.25 \tab$(a:c::e:f,\; a>c>f,\; a>e>f)\Rightarrow a+f>c+e.$

\begin{figure}[!ht] 
\centering
\includegraphics[scale=0.8]{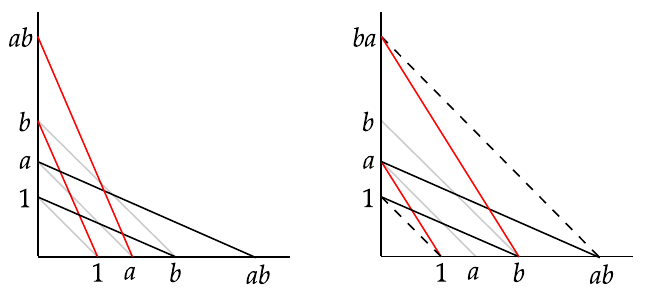}
\caption{Product of line segments (left) Commutativity of the product (right). } \label{Des1}
\end{figure}
\subsection{Arithmetic of line segments}

First, we demonstrate the commutativity of the product. 
By the definition of the product, we obtain 
(see red lines in Fig. \ref{Des1}):  
$$\frac a1=\frac {ab}b,\ \ \ \frac{ba}a=\frac b1.$$

Then, by  Proposition V.23, the following proportion holds:
\[\frac a1=\frac {ab}b,\ \frac{ba}a= \frac b1 \xrightarrow[V.23]{} \frac {ba}1=\frac {ab}1.\]

By Proposition V.9, it follows that
\[ba=ab.\]
\hfill{$\Box$}

In Fig. \ref{Des1} (left), we represent product $ab$ 
on two axes, given the congruence of the respective right-angled triangles.
In Fig. \ref{Des1} (right), we represent the product $ab$ as a result of this modification. 
 Note that the dashed lines are parallel due to a result derived from the theorems in Book V: since $ba=ab$, the respective triangle is isosceles.

 Conversely, in Hilbert's arithmetic of line segments and other modern approaches to the arithmetic of line segments, geometric arguments establish that certain lines are parallel, thereby justifying that $ab=ba$.

Second, the distributive law requires more careful attention.
The left diagram in Fig. \ref{Des2} illustrates the products \( c \cdot a \) and \( c \cdot b \); this representation assumes that the product is commutative. By drawing the parallel to the line $c1$ through $a+b$, we apply Thales' theorem to obtain  
\[\frac xa =\frac{ca} a,\]
where the segment \( a \) in the ratio \( x : a \) corresponds to the difference \( (a+b) - b \). 

Applying Euclid's Proposition V.9, we conclude that \( x = ca \). 

Ultimately, this leads to  
\[
x + cb = ca + cb.
\]

The continuous lines in the right diagram in Fig. \ref{Des2} represent the following proportions, expressed as fractions (the first follows from the definition of the product, and the second from the above argument):
\[ \frac{c}{1} = \frac{c(a+b)}{a+b}, \quad \frac{ca+cb}{c} = \frac{a+b}{1}.\]

Applying Proposition V.23, we obtain:  

\[
\frac{ca+cb}{1} = \frac{c(a+b)}{1}.
\]

Finally, applying Proposition V.9 once again, we arrive at:
\[
ca + cb = c(a+b),
\]
where \( ca \), \( cb \), and \( c(a+b) \) are constructed based on Descartes' definition.

\hfill{$\Box$}

Fig. \ref{Des1} and \ref{Des2}  depict the respective relations between parallel lines. However -- let us emphasize -- the arguments presented above do not rely on geometry but rather on propositions from Book V. Although modern geometers adopt Descartes' definition of the product, they seek to justify the laws of arithmetic through geometric principles. 

Dashed lines in diagrams Fig. \ref{Des1} and \ref{Des2} are parallel due to the theory of proportions. Based on the above arguments, Euclid's Proposition  V.23 
encodes the commutativity and distributivity of Descartes' product. 

\begin{figure}
\centerline{\includegraphics[scale=0.8]{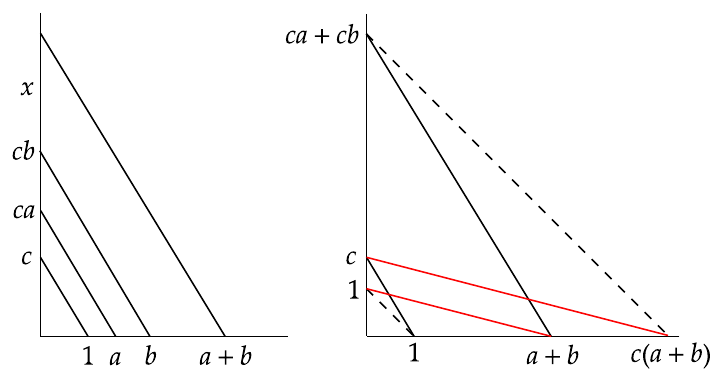}}
\caption{Distribuitive law.}
\label{Des2}
\end{figure}
\section{Hilbert}

Hilbert's axioms for synthetic geometry were first presented in \textit{Grundlagen der Geometrie} (1899) \cite{ref_DH99}. Although referring to Euclid's \textit{Elements}, they are based on a different methodology. Hilbert does not include a straightedge and compass as constructive tools. From the set of constructions developed by Euclid, he selects two specific ones: the transportation of line segments (\textit{Elements}, I.2, \textit{Grundlagen}, C1) and the transportation of angles (\textit{Elements}, I.23, \textit{Grundlagen}, C4). Accordingly, the deductive structures of the \textit{Elements} and \textit{Grundlagen} differ \cite{blaszczyk2021}.

Moreover, a new approach, unknown in Greek mathematics, is based on the uniqueness referenced in Axioms I.1, C.1, and C.4; the Parallel Axiom also implies uniqueness.

In what follows, we will point out only the differences related to Thales' theorem.

Below, we present Hilbert's axioms following Marvin Greenberg's concise version \cite{greenberg2008}.

\subsection{Hilbert system of axioms}

Hilbert adopts primitive concepts: point, line, and plane, as well as   primitive relationships:
point $B$ lies between $A$ and $C$, denoted here as $A-B-C$, congruence of line segments and angles. He also defines a half-line (ray), an angle, and a triangle.

{Axioms of Incidence}

I1. For any two distinct points A, B, there exists a~unique line $l$ containing $A$,~$B$. 

I2. Every line contains at least two points. 

I3. There exist three noncollinear points (that is, three points not all contained 
in a~single line). 

{Axioms of Betweenness }
 
B1. If $B$ is between $A$ and $C$, (written $A-B-C$), then $A$, $B$, $C$ are three distinct points on a~line, and also $C-B-A$. 

B2. For any two distinct points $A$, $B$, there exist  points $C$, $D$, $E$ such that $A-B-C$, $A-D-B$, and
$E - A - B$.

B3. Given three distinct points on a~line, one and only one of them is between 
the other two. 

B4. (Pasch). Let $A$, $B$, $C$ be three non collinear points, and let $l$ be a~line 
not containing any of $A$, $B$, $C$. If $l$ contains a~point $D$ lying between $A$ and $B$, then it must also contain either a~point lying between $A$ and $C$ or a~point lying between $B$ and $C$. 
 
{Axioms of Congruence for Line Segments}

C1. Given a~line segment $AB$, and given a~ray (half-line) $r$ originating at a~point $C$, there 
exists a~unique point $D$ on the ray $r$ such that $AB \equiv CD$.

C2. If $AB \equiv CD$ and $AB \equiv EF$, then $CD \equiv EF$. Every line segment is congruent to itself.

C3. (Addition). Given three points $A$, $B$, $C$ on a~line satisfying $A - B - C$, and 
three further points $D$, $E$, $F$ on a~line satisfying $D - E - F$, if $AB \equiv DE$ and 
$BC \equiv EF$, then $AC \equiv DF$.

{Axioms of congruence for Angles}

C4. Given an angle $\angle BAC$ and given a~ray $\overrightarrow{DF}$, there exists a~unique ray $\overrightarrow{DE}$, on a~given side of the line $DF$, such that $\angle BAC \equiv \angle EDF$. 

CS. For any three angles $\alpha, \beta, \gamma$, if $\alpha \equiv \beta$ and $\alpha \equiv \gamma$, then $\beta \equiv \gamma$. Every angle is congruent to itself. 

C6. (SAS) Given triangles $ABC$ and $DEF$, suppose that $AB \equiv DE$ and $AC \equiv DF$, and $\angle BAC \equiv \angle EDF$. Then the two triangles are congruent, namely, $BC \equiv EF$, $\angle ABC \equiv \angle DEF$ and $\angle ACB \equiv \angle DFE$. 

{Archimedes' axiom}
 
 Given line segments $AB$ and $CD$, there is a~natural number $n$ such that $n$ 
copies of $AB$ added together will be greater than $CD$. 

{Parallel axiom}

 For each point $A$ and each line $l$, there is at most one line containing $A$ that is parallel to $l$.

\subsection{Hilbert's  arithmetic of line segments}

\pvn 1. Addition and the greater-than relation between line segments and angles are defined in Hilbert’s system, whereas in the \textit{Elements}, these are a primitive operation and a primitive relationship, respectively. Today, Hilbert-style definitions are standard. Here is a reminder:
\begin{definition} \label{ad_hi} \cite[p.30]{dh1950} We
say that $c = AC$ is the sum of the two segments $a = AB$ and $b = BC$ if $B$ lies between $A$ and $C$. In other words
\[c=a+b\Leftrightarrow_{df} A-B-C.\]
\end{definition}

 The segments $a$ and $b$ are said to be smaller than $c$, which we indicate by writing
\[ a<c,\; b<c\Leftrightarrow_{df} A-B-C.\]

Addition is both associative and commutative.

\pvn 2. Thales' theorem is presented in \textit{Grundlagen}  as the final result in the chapter entitled \textit{Theory of Proportion}.
Hilbert begins this chapter by saying: ''At the beginning of this chapter, we shall present briefly certain preliminary ideas concerning complex number systems, which will later be of service to us in our discussion.`` He then enumerates 17 properties related to addition, multiplication, and the greater-than relationship. In fact, he provides the first-ever axioms for an ordered field.

In the following, Hilbert clarifies: ''In the present chapter, we propose, by the aid of these axioms, to establish Euclid’s theory of proportion; that is, we shall establish it for the plane and that independently of the axiom of Archimedes``.

Indeed, the definition of the product of line segments, as well as its properties such as commutativity and distributivity, is based on the properties of congruent triangles and angles inscribed in a circle, as presented in Books I and III of \textit{The Elements}. The Archimedean axiom is introduced as Definition 4 of Book V, meaning that Euclid's theory of proportion and similar triangles depends on it.

\begin{theorem}[{Pascal’s theorem}]\cite[p.25]{dh1950}

 Let  $A$, $B$, $C$ and $A'$, $B'$, $C'$ be two sets of points on the arms of an angle. If $CB'$ is parallel to $BC'$ and $CA'$ is  parallel to $AC'$, then $BA'$ is parallel to $AB'$.
\end{theorem}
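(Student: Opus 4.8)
\emph{Plan.} I read this as the affine (parallel) form of the Pappus--Pascal theorem, which in Hilbert's development is the pivot used to prove that the geometric product of segments is commutative. For that reason the proof must stay \emph{synthetic}: it may use incidence, betweenness and congruence together with the circle theorems of Book III (the inscribed-angle theorem and its converse), but it must not invoke the Archimedean axiom, and---crucially---it must not use segment multiplication, since Pascal's theorem is precisely what will later license that multiplication. Let $O$ denote the vertex, with $A,B,C$ on one arm and $A',B',C'$ on the other.

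\emph{Step 1 (parallels become similar triangles at the vertex).} Each arm is a transversal of the two parallel lines in a hypothesis, so equal corresponding angles give $\triangle OCB'\sim\triangle OBC'$ from $CB'\parallel BC'$ and $\triangle OCA'\sim\triangle OAC'$ from $CA'\parallel AC'$. Reading off the sides at $O$ yields the proportions $OC:OB::OB':OC'$ and $OC:OA::OA':OC'$, while the goal $BA'\parallel AB'$ is equivalent, by the same device, to $OB:OA::OA':OB'$.

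\emph{Step 2 (the crux).} The three proportions share no common middle ratio, so they cannot be chained by the \emph{ex aequali} propositions V.22--V.23; what they do share is the equality of rectangles $OC\cdot OC'=OB\cdot OB'=OA\cdot OA'$, a genuinely \emph{multiplicative} fact. Since neither Eudoxian proportion theory (without Archimedes) nor segment multiplication (not yet available) may supply it, I would realize this product relation geometrically: introduce an auxiliary circle and apply the inscribed-angle theorem together with the power of a point, so that the two hypotheses place the relevant points on one circle (equivalently, force one further pair of equal inscribed angles). The missing proportion $OB:OA::OA':OB'$ is then read off, giving $\triangle OBA'\sim\triangle OAB'$ and hence $BA'\parallel AB'$.

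\emph{Main obstacle.} The difficulty is concentrated entirely in Step 2: producing the multiplicative step by circles alone---choosing the correct auxiliary circle and justifying the concyclicity (or the equal-angle claim) without Archimedes---and then discharging the configuration cases (the order of the points along each arm, whether $O$ falls inside or outside the auxiliary circle, and degenerate coincidences). This is where the argument must lean on Book III rather than Book V, and it is exactly the point that Hilbert's construction is designed to finesse. By contrast, once segment multiplication is known to be a commutative field operation, the conclusion is immediate from $OC\cdot OC'=OB\cdot OB'=OA\cdot OA'$ by transitivity---but that shortcut is unavailable here, since Pascal's theorem is what establishes commutativity in the first place.
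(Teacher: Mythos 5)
Your framing is exactly right, and it matches the paper's reading of Hilbert: Pascal's theorem is the pivot that later licenses commutativity of segment multiplication, so the proof may use congruence and the circle theorems of Book III but neither proportion theory, nor segment arithmetic, nor the Archimedean axiom; the paper indeed records that Hilbert's proof ``explores the reverse of Euclid's III.22 on a quadrilateral inscribed in a circle.'' Your reduction of the statement to the rectangle equalities $OC\cdot OC' = OB\cdot OB' = OA\cdot OA'$ is also the correct algebraic skeleton. The problem is that the entire content of the theorem sits in your Step 2, and Step 2 is announced rather than proved: you never say which auxiliary circle to take, and the one concrete mechanism you do suggest --- that ``the two hypotheses place the relevant points on one circle'' --- fails for the given six points. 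Concyclicity of $B$, $C$, $B'$, $C'$ corresponds, in the eventual arithmetic, to $OB\cdot OC = OB'\cdot OC'$ (that is the power of the point $O$), whereas the hypothesis $CB' \parallel BC'$ corresponds to $OC\cdot OC' = OB\cdot OB'$; these are different relations, so no four of the given points are forced onto a circle by a parallelism, and the same computation rules out using both hypotheses to make $A$, $B$, $A'$, $B'$ concyclic. The auxiliary circle must come from a genuinely new construction. In Hilbert's argument (the one the paper points to) it comes from dropped perpendiculars: for an acute angle $\alpha$ he writes $\alpha c$ for the leg cut off from a right triangle with hypotenuse $c$, proves the commutation lemma $\alpha\beta c = \beta\alpha c$ by exhibiting a concyclic quadruple among the feet of the perpendiculars (right angles supply the circle, and the inscribed-angle theorem and its converse, III.21/III.22, supply the equal angles), and then obtains Pascal's theorem by applying this lemma; this is the intricate part that your sketch defers.

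Two smaller points. First, your Step 1 extracts proportions from ``similar triangles,'' but VI.4 is downstream of Pascal in this development, so those proportions are not yet available; this is harmless as motivation for identifying the target relation, but it cannot stand as a step of the proof. Second, ``power of a point'' (Euclid III.35--36) is itself a statement about products of segments (equality of rectangles), so invoking it runs into precisely the circularity you set out to avoid; what is legitimately available at this stage is only the angle form of the circle theorems --- equal inscribed angles and the cyclic-quadrilateral criterion --- together with the congruence axioms, and Hilbert's construction is arranged so that nothing more is needed.
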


\begin{figure}
\centering
\includegraphics[scale=0.9]{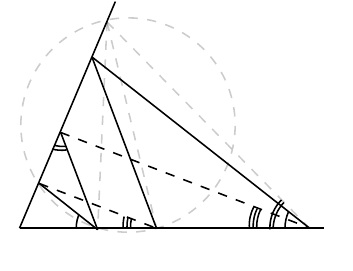} \, \, \, \includegraphics[scale=0.9]{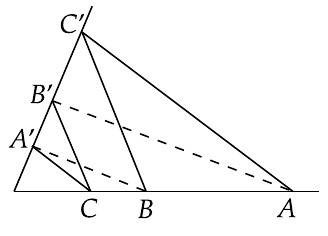}
\caption{Proof of Pascal's theorem}\label{Pascal}
\end{figure}

 The proof of this theorem is quite intricate, prompting Hilbert to dedicate an entire subsection to its discussion. Figure \ref{Pascal} illustrates the proof. Essentially, Hilbert explores the reverse of Euclid's III.22 on a quadrilateral inscribed in a circle.
 
 The definition of the product of line segments aligns with Descartes' approach, using the right angle instead of any angle. Moreover, Hilbert
 modifies Descartes' approach setting $a$ and $ab$ on the same arm of 
 the angle; see Fig.\ref{hilbert1} (left) and Fig. \ref{Des1}.

\begin{figure}[!ht] 
\centering
\includegraphics[scale=0.8]{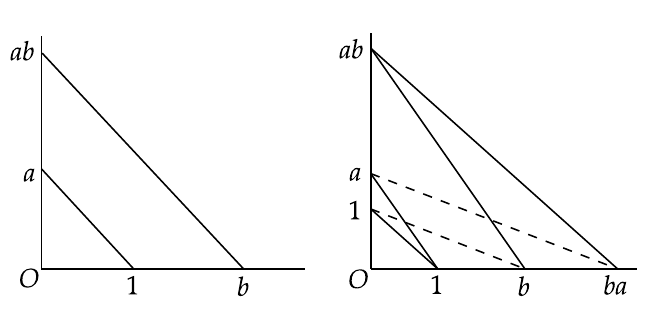}
\caption {Hilbert's definition of the product of (left). Commutativity of the product (right).} \label{hilbert1}
\end{figure}

Hilbert demonstrates that this product is commutative, associative, and distributive. The proofs are based on Pascal's theorem. Figure \ref{hilbert1} (right) illustrates the proof of the commutativity law: Since triangles $\triangle 1O1$ and $\triangle (ab)O(ba)$ are 
isosceles, the equality $ab=ba$ follows.

\pvn 3.  Hilbert axioms for an ordered field include the following: For $a\neq 0$, and $b$, there exists the unique element $x$ such that
\[ax=b.\]

 Figure \ref{hilbert7} (left) illustrates the construction of the line $\frac ba$.
The same figure shows that $\frac ba a=b$. Since multiplication is commutative, it follows that $\frac ba$ solves the equation $ax=b$.

Setting $b=1$ in this axiom implies the existence of the inverse $a^{-1}$; see Fig. \ref{hilbert7} (right).

\begin{figure}[!ht] 
\centering
\includegraphics[scale=0.8]{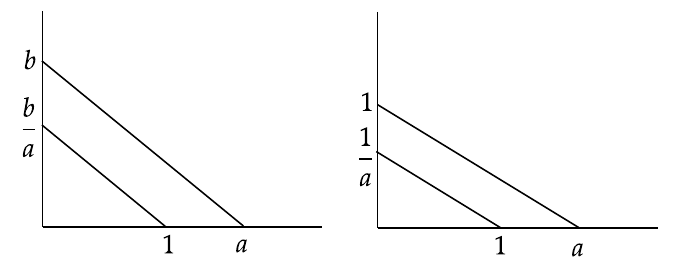}
\caption {Division of line segments (left). The inverse of a line segment (right).} \label{hilbert7}
\end{figure}

\subsection{Thales' theorem and similar triangles}

\pvn 1. In Descartes' \textit{La Geometrie}, we identify the implicit rule that transforms proportion into an equality of quotients \cite{blaszczyk2024}:
\[a:b::c:d\Leftrightarrow \frac ab=\frac cd.\]

However, just as Descartes' arithmetic is based on Thales' theorem, so is this rule. Hilbert arrived at a similar result without referring to Thales. 

\begin{definition} \cite[p. 34]{dh1950}
Segments $a$, $b$, $a'$, $b'$  are in proportion if and only if the equality of products $ab'$ and $ba'$ holds:
\[a:b=a':b'\Leftrightarrow_{df} ab'=a'b.\]
\end{definition}

Since Hilbert's arithmetic includes division, we can rephrase this as follows 
\[a:b=a':b'\Leftrightarrow \frac ab=\frac {a'}{b'}.\]
\begin{definition}\cite[p.34]{dh1950}
Equiangular triangles are called similar.
\end{definition}

\pvn 2. Before addressing Thales' theorem, Hilbert proves the counterpart of Euclid's Proposition VI.4.

\begin{theorem} \cite[p. 34]{dh1950} \label{prop_Hil1}
In equiangular triangles sides about equal angles are proportional.
\end{theorem}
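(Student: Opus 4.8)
The plan is to exploit the fact that Hilbert's product of segments is, by its very construction, a statement about a special pair of similar right triangles, and then to reduce the general equiangular case to this one. Recall that $ab$ is built inside a fixed right angle: laying the unit $1$ and the segment $b$ on one arm and $a$ on the other, drawing one transversal and its parallel yields $ab$ on the second arm. Read backwards, this says precisely that if two right triangles sit in the corner of the right angle with their legs along the two arms and their hypotenuses parallel, then the four leg-segments satisfy the product relation, i.e.\ the defining equality $ab'=a'b$ of the proportion. Parallelism of the hypotenuses is in turn equivalent to equality of the acute angles, since equal corresponding angles imply parallel lines (a Book I fact). This equivalence is the engine of the whole argument.

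First I would settle the right-triangle case. Place two right triangles with a common acute angle $\alpha$ in the standard right angle, horizontal legs $a,a'$ and vertical legs $b,b'$. Both hypotenuses make the angle $\alpha$ with the horizontal arm, hence are parallel, and the product construction yields $b=t\cdot a$ and $b'=t\cdot a'$ for one and the same segment $t=t(\alpha)$ read off the unit triangle; commutativity and associativity (the consequences of Pascal's theorem established above) then give $ab'=t(aa')=a'b$, so the legs are proportional. To reach the hypotenuses, write $a'=ka$ with $k=a'/a$ (division is available), whence the leg relation forces $b'=kb$: the primed triangle is the image of the unprimed one under the homothety from the vertex that scales every arm-coordinate by $k$. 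Using the product construction together with the uniqueness of the parallel (Playfair) and of segment transport (C1), one gets $A'B'\parallel AB$; completing the parallelogram on $AB$ and invoking that opposite sides of a parallelogram are congruent (Book I) lets one read off $A'B'=k\cdot AB$ in the segment arithmetic. Thus all three pairs of sides of equiangular right triangles are in the common ratio $k$.

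Finally I would treat arbitrary equiangular triangles $\triangle ABC$ and $\triangle A'B'C'$ by dropping the altitudes from corresponding vertices $A,A'$ to the opposite sides, with feet $H,H'$. Because $\angle B=\angle B'$ and $\angle C=\angle C'$, the right triangles $\triangle ABH,\triangle A'B'H'$ and $\triangle ACH,\triangle A'C'H'$ are equiangular, so the previous step applies to each pair. The ratio $AH:A'H'$ occurs in both, forcing a single common factor $k$; then $AB=kA'B'$, $AC=kA'C'$, and, adding the base sub-segments, $BC=BH+HC=k(B'H'+H'C')=kB'C'$. This yields $AB:A'B'=BC:B'C'=CA:C'A'$, which is exactly the asserted proportionality of sides about equal angles.

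The main obstacle is the hypotenuse step. The product is defined only along the two arms of a right angle, so the side that does not lie on an arm is not directly reachable by a product relation; bringing it in requires the auxiliary parallel and parallelogram constructions above, together with a careful appeal to Pascal's theorem and to the uniqueness axioms, rather than to area theory, which is not yet available at this point of \emph{Grundlagen}. A secondary nuisance is the betweenness bookkeeping in the last step: for an obtuse triangle the foot of the altitude need not lie between the endpoints of the base, so the identity $BC=BH+HC$ must be replaced by the appropriate signed combination of segments and justified directly from the betweenness axioms.
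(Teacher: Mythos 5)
Your first step (the leg--leg proportionality for equiangular right triangles placed in the corner of the right angle) is exactly Hilbert's first step, and it is sound: the product definition gives $b=ta$, $b'=ta'$ for the same $t$, and commutativity yields $ab'=a'b$. The proposal breaks down, however, at what you yourself call the main obstacle: the hypotenuse step. Your argument for $A'B'=k\cdot AB$ is not a proof. Invoking ``the homothety from the vertex that scales every arm-coordinate by $k$'' is circular at this point of \emph{Grundlagen}: the statement that a central dilation scales \emph{all} segments (not just those lying on lines through the center) by the factor $k$ is precisely the theory of similar triangles you are trying to establish. The parallelogram construction does not close the gap either: translating $AB$ along the arm so that $A$ lands on $A'$ puts the translate $P$ of $B$ on the line $A'B'$ (by Playfair) with $A'P\equiv AB$, but the leftover segment $PB'$ is itself the hypotenuse of a right triangle with legs $(k-1)a$ and $(k-1)b$, so you have only reduced ``hypotenuse scales by $k$'' to ``hypotenuse scales by $k-1$'' --- a descent that terminates for integer $k$ but not for an arbitrary segment ratio. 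Since your general case decomposes $\triangle ABC$ by an altitude, the sides $AB$ and $AC$ enter as \emph{hypotenuses} of the right triangles $\triangle ABH$, $\triangle ACH$, so the whole argument inherits this unproved step.

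This is exactly why Hilbert (and the paper) do not use the altitude. Hilbert decomposes each triangle from its \emph{incenter} $S$: the perpendiculars from $S$ to the three sides cut the triangle into right triangles whose legs are the inradius $r$ and the pieces $a_b, a_c, b_c, b_a,\dots$ of the sides, while the hypotenuses (the segments from $S$ to the vertices) are never needed. Each side of the original triangle is then a \emph{sum of legs}, e.g.\ $a=a_b+a_c$, so only the leg--leg case plus distributivity is required: from $a_b r'=a_b' r$, $a_c r'=a_c' r$ one gets $ar'=a'r$, similarly $br'=b'r$, and commutativity and associativity give $ab'=a'b$ after cancelling $r'$. (A side benefit: the feet of the perpendiculars from the incenter always lie between the endpoints of the sides, so the betweenness bookkeeping you worry about for obtuse triangles never arises.) To repair your proof you would either have to switch to this incenter decomposition, or first prove hypotenuse proportionality for right triangles by some independent device --- e.g.\ dropping the altitude from the right angle and exploiting Euclid VI.8-style relations --- which amounts to redoing a substantial part of Hilbert's chapter.
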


\begin{proof} Hilbert first considers right-angled triangles and then, in the general case, examines triangles composed of right-angled triangles; 
see Fig. \ref{hilbert2} and Fig. \ref{hilbert3}.

\begin{figure}[!ht] 
\centering
\includegraphics[scale=0.8]{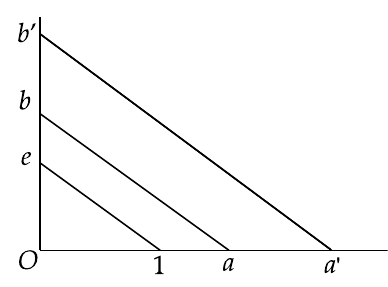} 
 \caption {The first part of proposition \ref{prop_Hil1}} \label{hilbert2}
\end{figure}

The assumption about equiangular triangles $\triangle bOa$ and $\triangle b'Oa'$ implies that lines through $a, b$ and $a', b'$ are parallel. A point $e$ is chosen so that the line passing through $e, 1$ is parallel to these lines. 

By the definition of the product, 
we have $b = ea$ and $b' = ea'$. Then, $a'b = a'ea$, $ ea'a=b'a$. Due to commutativity, 
$a'b = b'a$, that is, $a : b = a' : b'$.

In the general case, Hilbert proceeds as follows: intersecting the bisectors in triangles $T$ and $T'$ determines points 
$S$ and $S'$. Perpendiculars dropped from $S$ and $S'$ 
 onto the sides of the triangles decompose  $T$ and $T'$ 
  into right-angled triangles such that respective pairs are equiangular. 
  This approach allows Hilbert to apply the previous result related to right-angled triangles; see Fig. \ref{hilbert3}.

 
\begin{figure}[!ht] 
\centering
\includegraphics[scale=0.8]{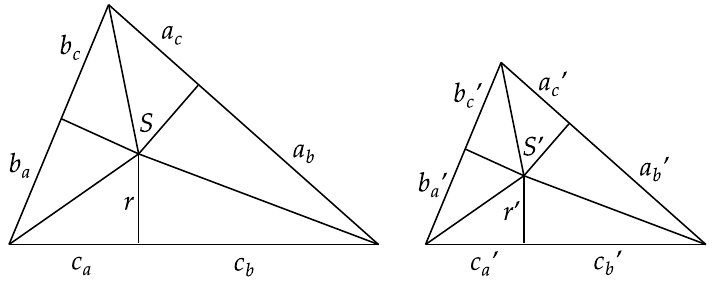}
 \caption {Decomposition of equiangular triangles.} \label{hilbert3}
\end{figure}

Consequently, proportions relating to respective triangles making $T$ and $T'$ follow: 
\[a_b : r = a_b': r',\ \ \ a_c : r = a_c': r', \ \ \
 b_c : r = b_c': r', \ \ \ b_a : r = b_a' : r'.\]


Turning proportions into equalities of products gives:
\[a_b  r' = a_b' r,\ \ \ a_c r' = a_c'r, \ \ \
 b_c r' = b_c'r, \ \ \ b_a  r' = b_a'  r.\]

By the distributive law, one obtains:
 
$$(a_b+a_c)r'= (a_b'+a_c')r',\;\;\;\;\; (b_c+b_a)r'= (b_c'+b_a')r',$$

Then:
\[a  r' = a'  r, \ \ \ \  b  r' = b'  r,\]

and also:
$$b'a  r' = b'a'  r, \;\;\;\;\; a'b  r' = a'b'  r.$$

Finally, dividing the equality $b'a  r' =  a'b  r'$  by $r'$ gives
$$b'a =  a'b, $$

or, in the equivalent form:
$$a : b = a' : b'.$$

Note that, to divide $b'a  r' =  a'b  r'$  by $r'$, Hilbert had to show that multiplication of line segments is an associative operation.
\end{proof}

Furthermore Hilbert writes: ``From the theorem just demonstrated, we can easily deduce the fundamental theorem in the theory of proportion''.  

\begin{theorem} \cite[p. 35]{dh1950}\label{prop_Hil2}
If two parallel lines cut segments
$a$, $b$    from one side of an angle and segments $a'$, $b'$ from the other side, then the proportion $a : b = a' : b'$ holds.
Conversely, if  four segments $a$, $b$, $a'$, $b'$
satisfy this proportion 
and $a$, $a'$ and
$b$, $b'$ 
are laid off along the two sides of an angle respectively, then the straight lines joining the extremities of
 $a$
and $b$ and of $a'$ and $b'$ are parallel.
\end{theorem}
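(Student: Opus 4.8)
The plan is to obtain both directions as short corollaries of Theorem \ref{prop_Hil1} (Hilbert's form of VI.4), using the ordered-field arithmetic of segments together with the uniqueness built into Axiom C1 and the Parallel Axiom — exactly the ``easy deduction'' Hilbert advertises.

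First I would fix the configuration: let $O$ be the vertex of the angle with arms $g, g'$; lay off $OA = a$, $OB = b$ on $g$ and $OA' = a'$, $OB' = b'$ on $g'$, so that the two transversals in question are $AA'$ and $BB'$. For the forward direction I assume $AA' \parallel BB'$. The triangles $\triangle OAA'$ and $\triangle OBB'$ share the angle at $O$; since $AA' \parallel BB'$ and the arm $g$ is a transversal, the corresponding angles $\angle OAA'$ and $\angle OBB'$ are congruent, and likewise $\angle OA'A \equiv \angle OB'B$ with $g'$ as transversal. Hence the triangles are equiangular, and Theorem \ref{prop_Hil1}, applied with $a, a'$ the sides about the vertex angle in the first triangle and $b, b'$ those in the second, yields $a : a' = b : b'$. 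By the definition $x : y = u : v \Leftrightarrow_{df} xv = yu$ this reads $ab' = a'b$, which is precisely $a : b = a' : b'$.

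For the converse I assume $a : b = a' : b'$, i.e. $ab' = a'b$. By the Parallel Axiom there is a unique line $q$ through $A$ parallel to $BB'$; let it meet $g'$ at $A''$ and set $a'' = OA''$. The forward direction applied to $q \parallel BB'$ gives $a : b = a'' : b'$, that is $ab' = a''b$. Comparing the two products yields $a''b = a'b$, and cancelling the nonzero segment $b$ in the field of segments gives $a'' = a'$. By the uniqueness clause of C1 this forces $A'' = A'$, so $q = AA'$ and therefore $AA' \parallel BB'$.

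The genuinely Hilbertian work all sits inside Theorem \ref{prop_Hil1}, which already rests on Pascal's theorem and on the commutativity and associativity of the segment product; once that is granted, the forward direction is merely a matching of cross-products. The hard part — or at least the part demanding care — will be the converse's reliance on the cancellation law $a''b = a'b \Rightarrow a'' = a'$: this is not geometry but a consequence of the ordered-field axioms Hilbert postulates for segments (solvability of $bx = c$ for $b \neq 0$), and it is what replaces Euclid's appeal to areas and to V.9. A secondary point to watch is keeping the correspondence of sides in Theorem \ref{prop_Hil1} straight, so that the proportion emerges as $a : b = a' : b'$ rather than a permutation of it, and ensuring that the equiangularity step is drawn from the theory of parallels established before the segment arithmetic, so that the argument is not silently circular with Thales itself.
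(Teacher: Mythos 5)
Your proposal is correct and follows essentially the paper's own route: the forward direction is reduced to Theorem \ref{prop_Hil1} applied to the two equiangular triangles cut off by the parallels, and the converse is obtained by drawing the unique parallel through $A$ (Playfair), invoking the forward direction, and cancelling in the ordered field of segments — exactly the paper's argument, except that the paper derives the contradiction $b'=b''$ where you identify $A''=A'$ via C1. The only discrepancy is one of configuration: the paper (following Hilbert's figure) reads $b,b'$ as the segments lying \emph{between} the two parallels, so its forward direction also requires the separation rule (\ref{ratio}), i.e.\ $\frac{a}{a'}=\frac{a+b}{a'+b'}\Rightarrow\frac{a}{a'}=\frac{b}{b'}$, whereas on your reading (all four segments laid off from the vertex) Theorem \ref{prop_Hil1} yields the stated proportion directly.
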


 Hilbert leaves this without proof. Indeed, the proof proceeds straightforwardly, if  instead of proportion 
 $a:b=a':b'$, we consider the equality:
 \[ \frac ab=\frac{a'}{b'}.\]

Then, we can apply the rule of arithmetic, specifically:\footnote{In fact, it is an arithmetic interpretation of Euclid's Proposition V.17. }  
\begin{equation}\label{ratio} \frac a{a'}=\frac {a+b}{a'+b'}\Rightarrow 
   \frac a{a'}=\frac {b}{b'}.\end{equation}
 
  As triangles $\triangle BAC$ and $\triangle BDE$ are equiangular, it follows from the previous theorem and (\ref{ratio}) that
  $$\frac a{a'}=\frac {b}{b'}.$$

\begin{figure}[!ht] 
\centering
\includegraphics[scale=0.7]{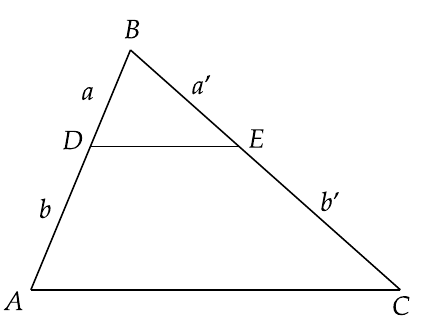} \includegraphics[scale=0.7]{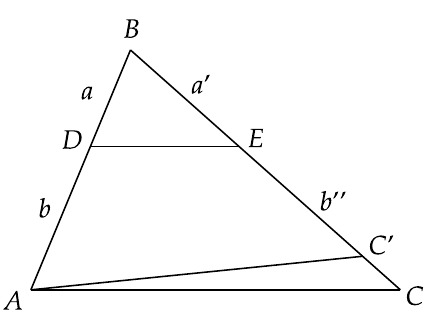}
 \caption {Proposition \ref{prop_Hil2}} \label{hilbert4}
\end{figure}

For the second part, suppose $\frac ab = \frac{a'}{b'}$ and $DE\nparallel AC$.
Let the line $AC'$ parallel to $DE$ cut on the side $BC$ line segment $b''$. 
By the Playfair's axiom, the segment $b"$ is unique.\footnote{For clarity, $b'=BC$ and $b''=BC'$.}  Now, let $b''<b'$; see Fig. \ref{hilbert4} 
  
  By the first part, the equalitites hold:
\[\frac {a'}{b'}=\frac ab=\frac {a'}{b''}.\]

This leads to a contradiction, as it implies $b'=b''$. 

\hfill{$\Box$}

\pv Note that in Euclid's system, the implication
\[a:b::a:b''\Rightarrow b'=b''\]
also holds but is based on Proposition V.9, which requires Archimedes axiom. 

On the other hand, 
\[\frac {a'}{b'}=\frac {a'}{b''}\Rightarrow  b=b''\]
obtains in any ordered field (Archimedean or non-Archimedean) and, therefore, in Hilbert's arithmetic of line segments.

\section{Harsthorne}

 \subsection{Two ways of introducing product}

 \begin{figure}[!ht] 
\centering
\includegraphics[scale=0.7]{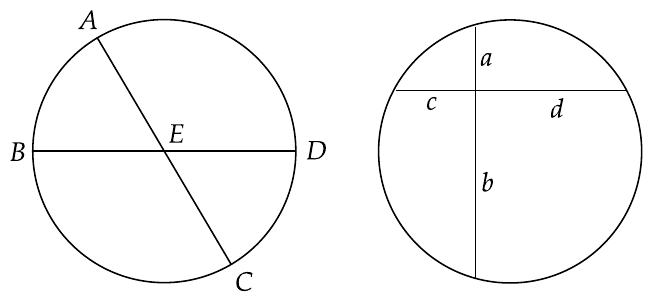}
\caption{\textit{Elements}, III.35 (left) and its simplified version (right).} \label{EuIII35}
\end{figure}
 \pvn 1. There are two propositions in Euclid's \textit{Elements} that may serve as a basis for the product of line segments: VI.12 and III.35. Descartes and later Hilbert followed the first path, while Robin Hartshorne \cite{ref_RH} chose the other.

 Hilbert's axioms do not include circles; however, his arithmetic employs properties of quadrilaterals inscribed in a circle. Hartshorne simplified these arguments by introducing the arithmetic of line segments based on Euclid's Proposition III.35.

Euclid states III.35 in terms of equal areas:  Chords in a circle intersect in such a way that the respective rectangles are equal, 
    \[AE.EC=BE.ED\] 
    
    In a simplified form, we can consider perpendicular chords, 
    giving\footnote{Here, in Euclidean context, $AE.BE$ or $ab$ denotes a rectangle with sides $AE, EC$ or $a, b$.}
    \[ab=cd;\]  see. Fig. \ref{EuIII35}.      When one of the lines $a, b, c, d$ is set to $1$, say $c=1$, we may treat $d=ab$ as a definition of a product.

 \begin{figure}
\centering
\includegraphics[scale=0.8]{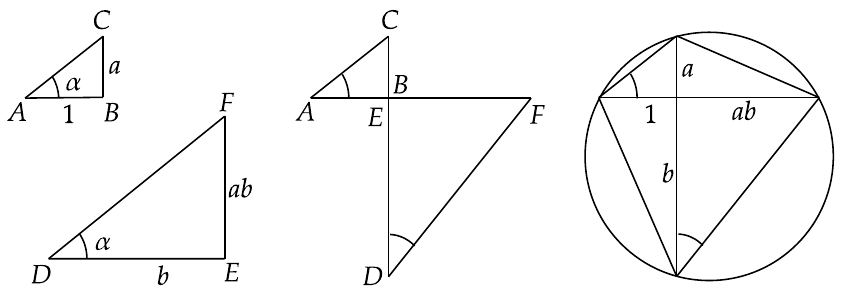} 
\caption{Hartshorne's definition of the product of line segments.}\label{hartsh6}
\end{figure}
 
  Hartshorne's definition of a product is as follows (\cite{ref_RH}, p. 170): Taking two equiangular right-angled triangles — the first with legs $1$ and $a$, the second with leg $b$, the line $EF$ is the product of $CB$ and $ED$; see Fig. \ref{hartsh6} (left). Indeed, given that these triangles are arranged as shown in Fig. \ref{hartsh6} (middle), vertices $A, C, D$, and $E$  form a cyclic quadrilateral, and we can interpret the lines $AF$ and $CD$ as intersecting chords; see Fig. \ref{hartsh6} (right).

 \pvn 2. In Fig. \ref{hartsh5}, we compare two ways of introducing the product based on Euclid's propositions. The assumption $l\parallel p$, translates into the equality of angles, $\alpha=\beta$, and then, by VI.2a, into the proportion $a:c::d:b$. On the other hand, the proportion $a:c::d:b$, by VI.2b, translates into the condition $l\parallel p$. We summarize this in the following formula:
\[\alpha=\beta   \xLeftrightarrow[VI.2]{} \frac ac=\frac db.\]

In the second approach, if $\alpha=\beta$, 
then the vertices of the triangles form a cyclic quadrilateral,
 and by III.35, $ac=bd$. The implication can also be reversed, but the corresponding argument requires a new setting involving equal areas.  We summarize this in the following formula:
\[\alpha=\beta   \xLeftrightarrow[III.35]{} ac=bd. \]

The definition of multiplication of segments in the above manner allows Hartshorne to simplify the proof of the properties of multiplication.

\begin{figure}[!ht] 
\centering
\includegraphics[scale=0.8]{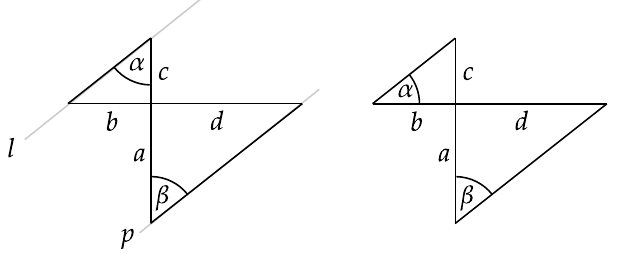}
\caption {Two ways of defining product: proportion (left) and intersecting chords (right).} \label{hartsh5}
\end{figure}
\subsection{Arithmetic and Thales' theorem}
 Hartshorne's approach simplifies proofs of properties such as commutativity and distributivity. For example, to show that $ab=ba$, we consider a triangle with angle $\alpha$ and determine the product $ab$. In the same circle, if we consider a triangle with angle $\beta$,  we obtain the product  $ba$.  It turns out to be the same line segment. Therefore, $ab=ba$; see  Fig. \ref{hartsh7} (left).

The inverse of a line segment is also easily determined within this approach. An isosceles triangle with height  $b$ and base $2$ determines a circle, which in turn determines the segment $\frac 1b$;  Fig. \ref{hartsh7} (middle and right).

A similar technique enables Hartshorne to prove that the multiplication of line segments is an associative operation   \cite[p. 172]{ref_RH}. Therefore, his arithmetic enables one to find the fourth proportional. Indeed, his proof of Thales' theorem follows Hilbert's theorems \ref{prop_Hil1} and \ref{prop_Hil2}, presented above.

\begin{figure}
\centering
\includegraphics[scale=0.8]{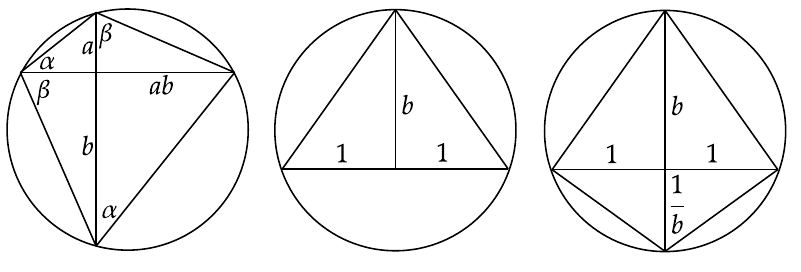} 
\caption{Hartshorne's arithmetic of line segments.}\label{hartsh7}
\end{figure}

\section{Borsuk-Szmielew}

\subsection{Introducing the measure of line segments}
\pvn 1. In \cite{ref_BSz72}, an absolute plane geometry is defined as 
the system $(S, {P}, {L}, \textbf{B}, \textbf{D})$, where $S$ (space) represents the set of points, ${P}$ the set of planes, ${L}$ the set of straight lines, $\textbf{B}$ the ternary relation of betweenness, and $\textbf{D}$ the quaternary relation of equidistance -- both in the space $S$. These primitive concepts and relations are governed by axioms of incidence, betweenness, and equidistance, and the congruence of line segments. 

Presenting this approach to Thales theorem, we consider only line segments, therefore,  for simplicity, we adopt the notation $a \equiv b$ to denote the congruence of line segments.\footnote{$\textbf{D}$ is the relation between four points and defines a congruence of line segments: $AB\equiv CD$ iff $\textbf{D}(A,B,C,D)$. Strictly speaking,  a line segment $a$ is an equivalence class of congruent segments determined by two points.}

Generally, Borsuk and Szmielew adopt Hilbert's axioms. However, unlike Hilbert's system, the Borsuk-Szmielew axioms do not include the concept of an angle. Thus,  instead of Hilbert's axiom C4 (a transportation of angles), they include an axiom on the transportation of triangles, and instead of C6 (the Side-Angle-Side congruence criteria), they include the so-called five-segments axiom. The parallel axiom remains the same as in Hilbert's system, 
namely, the Playfair's axiom.

\pvn 2. The crucial novelty is the continuity axiom expressed in terms of Dedekind cuts --
specifically, in terms of total order, or more precisely, the relation   $\textbf{B}$ (\cite{ref_BSz72}, p. 140).
This axiom enables the demonstration of the existence of a finitely additive measure on the set of line segments.
\begin{theorem} \cite[pp. 156--157]{ref_BSz72} For a given line segment $a$ and a real number $r$, there exists a unique measure $\varphi$ on the set of line segments, satisfying $\varphi (a)=r$ and the following conditions:
\begin{equation}\label{Measure} (1)\ \ a\equiv b\Rightarrow \varphi(a)=\varphi(b), \ \ (2)\ \ \varphi(a+b)=\varphi(a) +\varphi(b).\end{equation}
    \end{theorem}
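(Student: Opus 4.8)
The plan is to build $\varphi$ explicitly as a normalized ratio-to-$a$ function obtained by dyadic exhaustion, and then to read off its properties; uniqueness will follow from monotonicity together with the density of dyadic subdivisions. Throughout I assume $r>0$, since a genuine segment must receive positive measure.

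First I would construct, for each $n\in\N$, the $2^{-n}$-part $a_n$ of $a$, i.e.\ the unique (up to congruence) segment with $2^n a_n\equiv a$; this exists by iterating the midpoint (perpendicular-bisector) construction, which is available in absolute geometry. Given an arbitrary segment $b$, I lay off congruent copies of $a_n$ from one endpoint of $b$ along its line and set $q_n(b)$ to be the largest integer with $q_n(b)\,a_n\le b$ in the segment ordering (where $x\le y$ means $x$ is congruent to a subsegment of $y$). The existence of such a largest integer is exactly the Archimedean property, which is a consequence of the Dedekind continuity axiom assumed by Borsuk and Szmielew. The numbers $q_n(b)/2^n$ form a bounded monotone sequence, so by completeness of $\R$ they converge; I define $\rho(b):=\lim_n q_n(b)/2^n$ and $\varphi(b):=r\,\rho(b)$.

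The three required properties are then verified by routine estimates. Congruence invariance (1) is immediate: if $b\equiv b'$ then $q_n(b)=q_n(b')$ for every $n$, because the relation $\le$ and the copies of $a_n$ are defined purely through betweenness and congruence. For normalization, $2^n a_n\equiv a$ gives $q_n(a)=2^n$, hence $\rho(a)=1$ and $\varphi(a)=r$. Additivity (2) rests on the single inequality
\[ q_n(b)+q_n(c)\ \le\ q_n(b+c)\ \le\ q_n(b)+q_n(c)+1, \]
which follows by adding the two-sided bounds $q_n(b)a_n\le b<(q_n(b)+1)a_n$ and the analogous one for $c$. Dividing by $2^n$ and letting $n\to\infty$ yields $\rho(b+c)=\rho(b)+\rho(c)$, that is, $\varphi(b+c)=\varphi(b)+\varphi(c)$. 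For uniqueness, suppose $\psi$ also satisfies (1), (2) and $\psi(a)=r$. Additivity forces $\psi(n\cdot s)=n\,\psi(s)$ for every segment $s$ and $n\in\N$, so $\psi(a_n)=r/2^n$ and $\psi(q\,a_n)=qr/2^n$; thus $\psi$ agrees with $\varphi$ on all dyadic multiples of the $a_n$, a set that is order-dense in the segments. Since a nonnegative additive function is monotone ($b\le c$ implies $c\equiv b+d$, whence $\psi(c)=\psi(b)+\psi(d)\ge\psi(b)$), I can squeeze both $\psi(b)$ and $\varphi(b)$ between the common values $\psi(q_n(b)a_n)$ and $\psi((q_n(b)+1)a_n)$, whose gap is $r/2^n\to 0$; hence $\psi(b)=\varphi(b)$ for all $b$.

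The main obstacle will be the geometric groundwork underlying the counting argument rather than the limit itself: one must verify from the Borsuk--Szmielew axioms that segments can be bisected and copied without ambiguity, that the ordering $\le$ on segments is a genuine total order compatible with addition, and -- most importantly -- that the Dedekind continuity axiom delivers the Archimedean property which makes each $q_n(b)$ finite and well defined. Once these facts are in place, the passage to the limit and the verification of (1), (2), and uniqueness are comparatively mechanical.
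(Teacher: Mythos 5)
Your proposal is correct and follows essentially the route the paper attributes to Borsuk and Szmielew: constructing dyadic parts of $a$ through additions and bisections, counting how many fit into a given segment, and exploiting the density of the dyadic numbers in $(\R,<)$ (equivalently, the Archimedean property delivered by the continuity axiom) for both the limit construction and the squeeze argument for uniqueness. The paper itself does not spell out the details beyond this description, and your sketch fills them in consistently with it.
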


 The measure $\varphi(a)$, or the length of the line segment $a$, is denoted by $|a|_\varphi$.  Since there are various measures, for simplicity, we distinguish one that assigns the value 
$1$   to a given line segment, say $u$. We denote this measure as $|a|$. Thus
 \[|u|=1.\]

The crucial property of a measure is as follows: given its value on one line segment, for example, $\varphi(u)$, it determines the value for any other line segment. In other words, if the measure 
$\varphi$ is determined for some line segment $a$, then $\varphi$
extends uniquely on the entire set of line segments.

Clearly, in Euclidean geometry, the choice of the unit line segment 
is based on convention. In contrast, in hyperbolic geometry, there exists a unique line segment that is distinguished on a geometric basis (\cite{ref_BSz72}, p. 242). 

Note also that the real number unit is itself a matter of convention. Given that $(\R,+,\cdot\,,0,1,<)$ is the field of real numbers -- an ordered field equipped with the completeness axiom -- the field
$(\R_+, +_1,\circ,1,e,<_1)$ is also the field of real numbers. In this new field, the (standard) product plays the role of addition, and a new product is introduced via the exponential map:  
\[x+_1y=_{df}  x\cdot y,\ \ x\circ y=_{df}e^{\log x\cdot \log y};\]
while the total order $<_1$ is a restriction of $<$ to the set $\R_+$.

Amid these conventions, the total order of real numbers holds a special position, as it is the unique total order compatible with both addition and multiplication of real numbers.

\pvn 3. The existence measure theorem is quite involved and essentially explores a property of real numbers, specifically, that the set of dyadic numbers is dense in 
$(\R,<)$.\footnote{In the context of ordered fields, this is equivalent to the Archimedean axiom.} 

Similarly, due to the density of dyadic numbers in $(\R,<)$, the following theorem is established, which, as we demonstrate below, essentially encodes Thales' theorem.
\begin{theorem}\label{M2} \cite[p. 156]{ref_BSz72} For any two measures $\varphi_0$ and $\varphi_1$, there exists a real number $\lambda$ such that
\begin{equation}\label{T}\varphi_0=\lambda\varphi_1.\end{equation}
\end{theorem}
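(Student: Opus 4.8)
The plan is to reduce Theorem \ref{M2} to the \emph{uniqueness} clause of the existence-of-measure theorem stated just above. The key observation is that, since measures are lengths, they are positive on every nondegenerate segment; in particular, fixing any one segment $u$, we have $\varphi_1(u)>0$, so we may define the candidate scalar
\[\lambda=\frac{\varphi_0(u)}{\varphi_1(u)}.\]
The whole problem then collapses to showing that this single ratio, computed at $u$, already forces $\varphi_0(a)=\lambda\varphi_1(a)$ at every segment $a$.

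First I would verify that $\lambda\varphi_1$ is itself a measure, i.e. that it satisfies conditions (1) and (2) of \eqref{Measure}. This is immediate: congruence-invariance is inherited from $\varphi_1$, and additivity follows by pulling the constant $\lambda$ through the sum, $\lambda\varphi_1(a+b)=\lambda\varphi_1(a)+\lambda\varphi_1(b)$. Thus $\lambda\varphi_1$ lies in the same family of objects as $\varphi_0$. Next I would compute the value of each measure at $u$: by construction $(\lambda\varphi_1)(u)=\lambda\varphi_1(u)=\varphi_0(u)$, so $\varphi_0$ and $\lambda\varphi_1$ agree on the single segment $u$. At this point I invoke the previous theorem: for the given segment $u$ and the given real value $r=\varphi_0(u)$ there is a \emph{unique} measure taking that value on $u$. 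Since both $\varphi_0$ and $\lambda\varphi_1$ are measures with this value at $u$, they must coincide, which is exactly $\varphi_0=\lambda\varphi_1$.

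The only delicate point — and the place where the hypotheses really matter — is the choice of $u$ together with the guarantee $\varphi_1(u)\neq 0$; this is where positivity of length is used, and it also explains why the statement presupposes genuine (nontrivial) measures rather than the identically zero functional. If one prefers an argument that does not cite the uniqueness clause as a black box, the same conclusion can be reached directly from the density of dyadic fractions noted in the text. Every segment $a$ determines a Dedekind cut $r(a)$ in the dyadics via the comparisons of the dyadic multiples $du$ with $a$, bisection of a segment being always available; and since each $\varphi_i$ satisfies $\varphi_i(du)=d\,\varphi_i(u)$ for dyadic $d$ and is monotone (additivity plus positivity), both ratios $\varphi_0(a)/\varphi_0(u)$ and $\varphi_1(a)/\varphi_1(u)$ are squeezed to equal the \emph{same} real number $r(a)$. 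Hence $\varphi_0(a)=r(a)\varphi_0(u)=\lambda\,\varphi_1(a)$.

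I expect the main obstacle to be nothing conceptual but purely a matter of bookkeeping in the dyadic route — making the two-sided squeeze and the interchange of limits fully rigorous — which is precisely why routing the proof through the uniqueness clause of the preceding theorem is the cleaner and shorter path.
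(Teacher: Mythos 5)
Your proposal is correct, and your primary route — fixing $u$, setting $\lambda=\varphi_0(u)/\varphi_1(u)$, checking that $\lambda\varphi_1$ again satisfies conditions (1) and (2) of \eqref{Measure}, and then invoking the uniqueness clause of the preceding existence theorem — is essentially a formalization of the paper's own gloss (``a value on one line segment determines the measure,'' hence $\varphi_0=\tfrac rs\varphi_1$). The paper, however, locates the substance of the proof elsewhere, following Borsuk and Szmielew: it introduces the dyadic operation $\mathfrak{w}a$ (constructed by additions and bisections), the identities $\varphi(\mathfrak{w}a)=\mathfrak{w}\,\varphi(a)$ and $\varphi(a-b)=\varphi(a)-\varphi(b)$, compatibility with order, and then exploits the density of the dyadic numbers in $(\R,<)$ — which is precisely your fallback squeeze argument. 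The difference is mainly in where the work is placed: your uniqueness shortcut is shorter but borrows the dyadic machinery implicitly, since in Borsuk--Szmielew the uniqueness clause is itself established by that same dyadic approximation; your second route is self-contained and is the one the paper actually describes. Your caveat about positivity is also apt and worth keeping: as stated in the paper, a ``measure'' need only satisfy (1) and (2), so the identically zero functional qualifies, and the theorem is literally false for $\varphi_1\equiv 0$, $\varphi_0\not\equiv 0$; the intended reading (as in Borsuk--Szmielew, where lengths are positive reals) is that measures are positive-valued, which is exactly what guarantees $\varphi_1(u)\neq 0$ and makes your $\lambda$ well defined.
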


The proof of Theorem \ref{M2} involves a special operation on segments introduced by
Borsuk and Szmielew.  Specifically, for any dyadic number 
$\mathfrak w$ and any line segment $a$, they define a new operation:
\[\mathfrak wa,\]
where the line segment $\mathfrak wa$  
is constructed through additions and bisections. Moreover, for any measure function $\varphi$, the following holds:
\[\varphi(\mathfrak w a)=\mathfrak w \varphi(a), \ \ \ \varphi(a-b)=\varphi(a)-\varphi(b),\]
as well as the compatibility with the order:\footnote{Note, however, that in modern systems, the \textit{greater-than} relationship between line segments is defined through the relation \textbf{B}, and its uniqueness is not demonstrated.}
\[a<b\Rightarrow \varphi(a)<\varphi(b).\]

Since a value on a line segment $a$
determines the measure of any other line segment,  we can rephrase Theorem \ref{M2}
as follows: For any measure $\varphi$, there exists a real number $\lambda$ such that
\begin{equation}
    \varphi(a)=\lambda |a|.
\end{equation}

Roughly speaking, given two measures  $\varphi_0$,  $\varphi_1$ such that $\varphi_0 (a)=r$, and  $\varphi_1(a)=s$, the relationship between $\varphi_0$ and $\varphi_1$ is given by
\[ \varphi_0= \tfrac rs\varphi_1. \]

Finally \cite[pp. 156--157]{ref_BSz72}, Borsuk and Szmielew  demonstrate that a line $l$ is isometric to the line of real numbers $(\R,<)$.
In another words, starting from scratch -- using only the axioms of synthetic geometry along with continuity -- they show that any geometric line is isomorphic to the real number line, where the metric on $(\R,<)$ is given by the absolute value of real numbers: 
\[\varrho (r,s)=|r-s|,\ \ \ \ r,s\in\R.\]

Since the real numbers form a real-closed field, there exists the unique order compatible with both addition and multiplication, and consequently, the unique absolute value.

The systems of Birkhoff, as well as Millman and Parker, which we discuss below, take this theorem for granted, that is, they adopt as an axiom the existence of a bijection between a geometric line and the real number line $(\R,<)$.

\subsection{Thales' theorem}

\pvn 1. In the context of Thales' theorem, the crucial proposition is  that parallel projection $f$ of segments lying on a line $l$ onto a line $p$ is a similarity map; see Fig. \ref{Projection}.
\[f:l\mapsto p.\]

\begin{theorem}\label{BSz1} (\cite{ref_BSz72}, pp. 158, 216) Parallel projection of one line onto another line is a similarity map. \end{theorem}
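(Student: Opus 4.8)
The plan is to prove that the parallel projection $f:l\mapsto p$ preserves ratios of segment lengths, which is precisely the content of a similarity map. The key device will be the measure function $\varphi$ from the Existence of Measure theorem together with Theorem \ref{M2}, which states that any two measures differ by a constant real factor $\lambda$. First I would fix a measure $|\cdot|$ on the line $l$ (normalized so that some chosen unit segment has length $1$) and, separately, the induced measure on the line $p$. The goal is to show that there exists a constant $k\in\R$ such that for every segment $XY$ on $l$, the image segment $f(X)f(Y)$ on $p$ satisfies $|f(X)f(Y)|=k\,|XY|$.

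The central step is to define a function $\psi$ on segments of $l$ by the rule $\psi(XY)=|f(X)f(Y)|$, that is, we measure a segment of $l$ by first projecting it to $p$ and then taking its length there. The crucial claim is that $\psi$ is itself a measure in the sense of \eqref{Measure}: it respects congruence, and it is additive. Additivity is the geometrically substantive point. If $X-Z-Y$ are three collinear points on $l$, then because parallel lines through $X,Z,Y$ meet $p$ in points preserving betweenness, we get $f(X)-f(Z)-f(Y)$, so $\psi(XY)=\psi(XZ)+\psi(ZY)$ follows from additivity of the length measure on $p$ (the second clause of \eqref{Measure}). The congruence-invariance $XY\equiv X'Y'\Rightarrow\psi(XY)=\psi(X'Y')$ is where the real work lies, and I expect this to be the main obstacle: one must show that congruent segments on $l$ project to congruent segments on $p$. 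This does not follow from incidence and betweenness alone; it requires the full parallel-projection apparatus of the absolute plane, exploiting the Playfair axiom and the congruence axioms (the five-segments axiom in the Borsuk--Szmielew formulation) to transfer congruences across the strip of parallels. Concretely, one builds congruent auxiliary triangles between the two lines and invokes the segment-transport uniqueness to conclude that equal segments on $l$ yield equal segments on $p$.

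Once $\psi$ is established to be a measure, the proof concludes immediately. By Theorem \ref{M2}, any two measures on the set of segments of $l$ are proportional, so $\psi=\lambda\,|\cdot|$ for some real $\lambda$; equivalently, for all segments $XY$ of $l$,
\[
|f(X)f(Y)| = \lambda\,|XY|.
\]
This constant ratio across all segments is exactly the statement that $f$ is a similarity map, completing the argument. The elegance of the Borsuk--Szmielew framework is that the whole of Thales' theorem is thereby reduced to the abstract uniqueness-up-to-scalar of measures, so the only genuinely geometric content that must be checked by hand is the congruence-preservation of the projection, with additivity and betweenness-preservation being comparatively routine consequences of Pasch and the order axioms.
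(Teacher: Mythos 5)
Your proposal is correct and follows essentially the same route as the paper: define the induced function $\varphi(a)=|f(a)|$ on segments, verify the two conditions of \eqref{Measure} (with congruence-invariance as the geometric core, which the paper carries out via parallelograms, Euclid I.33--34, and the SAS criterion, and additivity via compatibility with $\textbf{B}$), and then invoke Theorem \ref{M2} to obtain the scalar $\lambda$ with $|f(a)|=\lambda|a|$. Your identification of congruence-preservation as the substantive step and the reduction of everything else to the uniqueness-up-to-scalar of measures matches the paper's proof exactly.
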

    
\begin{figure}[!ht]
\centerline{\includegraphics[scale=0.8]{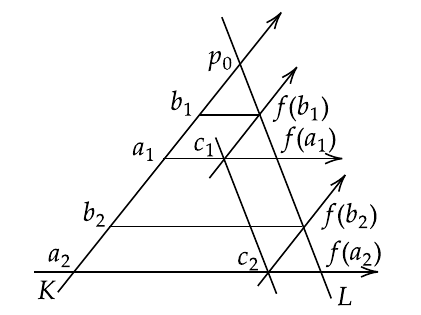}\includegraphics[scale=0.8]{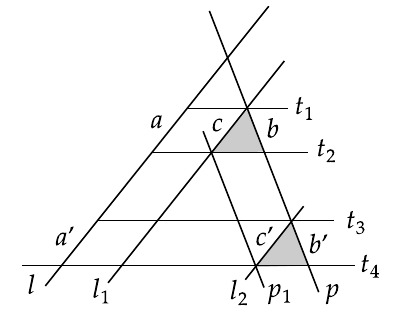}}
\caption[p]{Properties of parallel projections, \cite{ref_BSz72}, p. 215  (left), schematized proof (right). }\label{Projection}
\end{figure}
\begin{proof}
First, we show that the parallel projection defines a measure, that is, it satisfies the conditions \eqref{Measure}. We focus on the first condition, as it relates to a geometric insight that we will explore further in our paper.
The second condition consists in showing that the parallel projection is compatible with the relation $\textbf{B}$.

 Fig. \ref{Projection} (left) represents Borsuk and Szmielew's diagram, where 
$f$ maps points. In the neighboring diagram, we interpret 
$f$ as mapping line segments.

Let  
\[t_1\parallel t_2\parallel t_3 \parallel t_4, \ \ \ l\parallel l_1\parallel l_2,\ \ \ p\parallel p_1, \ \  f(a)=b,\ f(a')=b'.\] 

The aim is to show the relationship:
\[a\equiv a'\Rightarrow b\equiv b'.\]

 In Fig. \ref{Projection} (left), the quadrilateral $a_1b_1f(b_1)c_1$ is a parallelogram. Thus, $a\equiv c$. Similarly, we show that $a'\equiv c'$. Therefore, $c\equiv c'$.\footnote{The properties of parallelograms applied in this argument are stated in Euclid's Propositions I.33 and I.34.}

Triangles $\triangle c_1f(b_1)f(a_1)$ and $\triangle c_2f(b_2)f(a_2)$, or the grey triangles in the right diagram, are equiangular. By the Side-Angle-Side congruence criterion, we have, $f(b_1)f(a_1)\equiv f(b_2)f(a_2)$, or in the right diagram $b\equiv b'$.  

Second, the parallel projection $f$ satisfies conditions \eqref{Measure},  one can define a measure:
\[\varphi(a)=|f(a)|.\]

As observed earlier, given a value on one line segment,  a measure is determined, which means that $\varphi$
extends uniquely to the set of all line segments.

Now, since $\varphi$ is a measure, by Theorem \ref{M2}, there exists a real number $\lambda$ such that
\[\varphi(a)=|f(a)|=\lambda |a|.\]

This means that  $\varphi$ is a similarity map,  with $\lambda$ being the similarity scale. 
\end{proof}
\pvn 2. Thales' theorem is phrased in terms of similarity mappings and reduces to the statement that the parallel projection 
$f$ is a similarity map (\cite{ref_BSz72}, p. 216). 

The substance of this theorem is  as follows: For parallel projection $f$ of  $l$ onto $p$ there exists a real number $\lambda$ such that for any line segments $a, b$ on $l$ obtains (see Fig. \ref{TbyBSz}):
\[f(a)=\lambda a,\ \     f(b)=\lambda b.    \]

\begin{figure}[!ht]
\centerline{\includegraphics[scale=0.8]{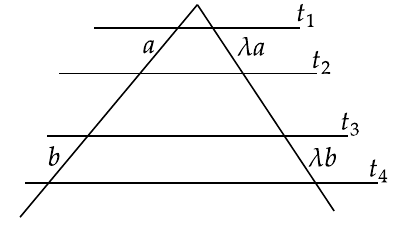}}\caption{Thales' theorem by Borsuk and Szmielew.}\label{TbyBSz}
\end{figure}

\subsection{Similar triangles}
\pvn 1. As a consequence of Thales' theorem, Borsuk and Szmielew obtain the following: (\cite{ref_BSz72}, p. 216--217):

In a triangle $\triangle ABC$, if lines $p_1, l_1$, parallel to $AB$ and $AC$ respectively, intersect the sides of  triangle $\triangle ABC$, cutting line segments $a$ on side $AC$,
$b$ on side $BC$, and $c$ on side $AB$, then the following equalities hold:
\[\frac a{a'}=\frac b{b'}=\frac c{c'},\]
{where} $AC=c'$, $BC=b'$, and $AB=c'$; see Fig. \ref{BSzVI4}.

Indeed, taking into account parallels $p, p_1$, by \ref{BSz1}, we obtain
\[b=\lambda a,\ \ b'=\lambda a'.\]

Similarly, considering the parallels  $l, l_1$, we obtain
\[b=\mu c,\ \ b'=\mu c'.\]

Finally, due to the arithmetic of real numbers, it follows that:
\[\frac a{a'}=\frac {\lambda a}{\lambda a'}=\frac b{b'}=\frac {\mu c}{\mu c'}  =\frac c{c'}.\] 

\begin{figure}[!ht]
\centerline{\includegraphics[scale=0.8]{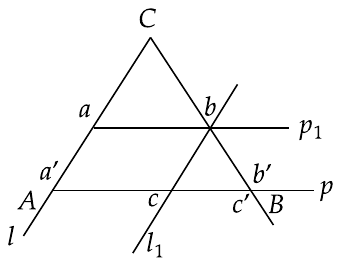}}\caption{Between Thales' theorem and Euclid's VI.4.}\label{BSzVI4}
\end{figure}

\pvn 2. At the top of this train of propositions, Borsuk and Szmielew place their interpretation of Euclid's Proposition VI.4 (\cite{ref_BSz72}, p. 217): 
\textit{In equiangular triangles, the sides about equal angles are proportional}; in this context, proportional means the equality of quotients within the arithmetic of real numbers.

Indeed, if the triangles $T$, with sides $a, b, c$, and $T'$, with sides $a', b', c'$,  are equiangular, then by cutting a copy of side $a$ on side $a'$ and drawing a parallel $l$ to side $p$, we construct a copy of the triangle $T$ inside triangle $T'$; see Fig. \ref{BVI4}. Then, by the previous result,
\[\frac a {a'}=\frac b{b'},\]
or, as required
\[\frac ab=\frac {a'}{b'}\]
The same holds for the pair $c, c'$.
\begin{figure}[!ht]
\centerline{\includegraphics[scale=0.9]{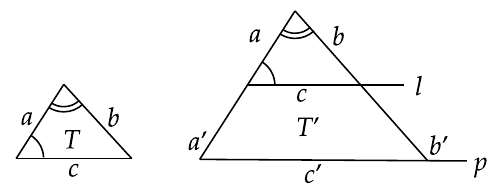}}\caption{Euclid' Proposition VI.4 by Borsuk and Szmielew.}\label{BVI4}
\end{figure}
\section{Birkhoff}

 \pvn 1. Birkhoff \cite{bir1932} designed a system that enables one to ''present the simplest geometric facts``. As usual in 20th-century systems, he adopts point and line as undefined terms. Furthermore, a real number  $d(A,B)$, called the distance between two points, is an undefined relation between two points. Similarly, a real number $\angle AOB\,(mod\,2\pi)$, called an angle formed by  points $A, O, B$, is an undefined relation between three points.  In fact, Birkhoff combines the concept of congruence with an interpretation in terms of real numbers.

The system is based on four axioms.

1. Postulate of Line Measure. A set of points $\{A, B, ...\}$ on any line can be put into one--to--one correspondence with the real numbers $\{a, b, ...\}$ so that $|b-a| = d(A,B)$ for all points $A$ and $B$.

This postulate enables the transportation of line segments.

2. Point-Line Postulate. There is one and only one line, $l$, that contains any two given distinct points $P$ and $Q$.

Non-intersecting lines are called parallel.

3. Postulate of Angle Measure. A set of rays $\{l, m, n...\}$ through any point $O$ can be put into one-to-one correspondence with the real numbers $a(mod 2\pi)$ so that if $A$ and $B$ are points (not equal to $O$) of $l$ and $m$, respectively, the difference $a_m - a_l (mod 2\pi)$ of the numbers associated with the lines $l$ and $m$ is $\angle AOB$.

 This postulate enables the transportation of angles. By the following convention, it also introduces the radian measure of angles: ''Two half lines $l, m$ through $O$ are said to form a \textit{straight angle} if $\angle lOm=\pi$``. (\cite{bir1932}, p. 332)  
 
 Interestingly, in the \textit{Elements}, this corresponds to Proposition I.13, where, instead of $\pi$, it states ''two right angles".

4. Postulate of Similarity. Given two triangles $ABC$ and $A'B'C'$ and some constant $k>0$, $d(A', B') = kd(A, B)$, 
$d(A', C')=kd(A, C)$ and \mbox{$\angle B'A'C'=\angle BAC$}, then \mbox{$d(B', C')=kd(B,C)$}, \mbox{$\angle C'B'A'= \angle CBA$}, and $\angle A'C'B' = \angle ACB$ 

This postulate implies Wallis's axiom, which, in standard systems, is proven to be equivalent to Euclid's Parallel Postulate or the Playfair Axiom.

By definition, similar triangles are equiangular, and their corresponding sides are proportional. 

Fig. \ref{figBirk}  represents Postulate IV schematically. It is Euclid's Proposition VI.6.
\begin{figure}[!ht] 
\centering
\includegraphics[scale=0.7]{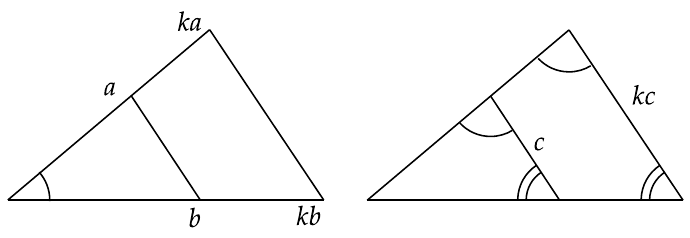}
\caption {Birkhoff's Postulate IV.} \label{figBirk}
\end{figure}

\pvn 2. Birkhoff shows that from his postulates follow Euclid' Proposition VI.4 and VI.5.
He also derives other propositions, such as I.5,  I.32, I.33--34, I.47, and Playfair's version of the parallel axiom.

 Ultimately, Birkhoff  seeks to prove Euclid's VI.33
 (ee Fig.\ref{birkhoff2} ):
\[\frac{\angle POR}{arc\,PR}=\frac{\angle POQ}{arc\,PQ}.\]

\begin{figure}[!ht] 
\centering
\includegraphics[scale=0.7]{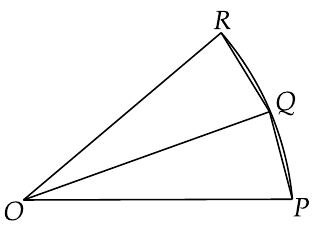}
\caption {Birkhoff's version of Euclid's VI.33.} \label{birkhoff2}
\end{figure}

To this end, he applies continuity arguments and a simplified proportion. Finally, he writes:
''Hence the (sensed) angle $\angle POQ$ coincides with the (sensed) arc lengthen $PQ$ subtended on the unit circle`` \cite[p. 345]{bir1932}.

 However, this argument relies on the following supposition: ''On the basis of the preceding theorem, Euclidean arc length can be defined in the usual manner and the angle $POQ$ ($P, Q$ on the circle) may be defined as the arc $PQ$ on the unit circle ($r=1$)``  \cite[p. 344]{bir1932}.

The usual definition is based on the Riemann integral, which assumes the radian measure. Clearly, Postulate III explicitly adopts the radian measure of angles.

\pvn 3.   Birkhoff does not prove Thales' theorem; therefore, we present our version based on his postulates.

Thales' theorem, specifically part VI.2b, follows from Postulate IV. 

Indeed, 
if $\frac{a}{c}=\frac{b}{d}$, than $\frac{d}{c}=\frac{b}{a}$. Taking $k=\frac{b}{a}$ and applying Postulate IV, we conclude that the respective trianlges are equiangular, which means the lines $l$ and $p$  are parallel; see Fig. \ref{figVI2b} (left).  

Now, for part VI.2a, suppose the lines $l$ and $p$ are parallel, but the required proportion does not hold,  i.e., $\frac{a}{c}\neq\frac{b}{d}$. By the arithmetic of real numbers, there exists $d'$ such that the equality holds: $\frac{a}{c}=\frac{b}{d'}$. Applying the previous part of the proof to this
proportion, we obtain that the lines $l$ and $q$ are parallel. However, this contradicts  Playfair's Axiom and the uniqueness of parallel to $l$ passing through the endpoint of line segment $b$; see Fig. \ref{figVI2b} (right). 

Note that the argument relies on a simple rule of the arithmetic of real numbers, which states that for three numbers $a, b, c$, there exists a fourth number $d'$
such that the following equality holds:
$\frac{a}{c}=\frac{b}{d'}$. Its ancient Greek counterpart is called the fourth proportional. Indeed, we include it as an axiom for the theory developed in Book V. However, it is employed implicitly, and Euclid's proof of VI.2 does not make use of it \cite[p. 47--51]{ref_BP}.

\section {Millman--Parker}

\pvn 1.  Millman-Parker's system \cite{F} is a mixture of Hilbert's synthetic approach and the
 metric space technique applied to the plane $G$; in effect, $G$ turns out to be $\R\times\R$. They take \textit{point}, \textit{line} and \textit{plane} as a primitive concepts, and  introduce incidence axioms:
 
(i) For every two points $A$, $B$ there is a line $l$ with $A\in l$ and $B\in l$.

(ii) Every line has at least two points.

Instead of congruence of line segments, they introduce a distance map. 

\begin{definition} \cite[p. 28]{F} A distance is a function  $d: G \times G \rightarrow \mathbb {R}$ such that for all $A, B \in G$:
\begin{enumerate}
\item  $d(A, B) \geq 0$;

\item $d(A, B) =0$ if and only if $A = B$;

\item $d(A, B) = d(B, A)$.
\end{enumerate}
\end{definition}

 Then, the congruence is interpreted through the distance map:
 \[AB\equiv CD \Leftrightarrow d(A,B)= d(C,D).\]

The relationship \textit{lying  between}, similarly, is defined by the distance map:
\[ A - B - C \Leftrightarrow  d(AB) + d(BC) = d(AC).\]

Borsuk and Szmielew established a bridge between synthetic and metric geometry: starting with Hilbert-style axioms, they introduced a metric and a coordinate system. Millman and Parker, on the other hand, simply introduce a coordinate system by definition.  

\begin{definition} \cite[p. 30]{F}
A function $f: l \rightarrow \mathbb R$ is a ruler (or coordinate system) for the line $l$ if:
\begin{enumerate}
\item $f$ is a bijection;

\item for each pair of points $A$ and $B$ on $l$  
$$|f(A) -f(B)|=d(A, B).$$
\end{enumerate}
\end{definition}

In other words, $f$ is an isometry, given that a metric on the line $(\R,<)$
 is introduced by the absolute value. 
 
 Let us reiterate, since none of the geometers pay attention to this fact: as the real numbers form a real-closed field, there exists the unique order compatible with addition and multiplication, and consequently, the unique absolute value on the real numbers.
\pvn 2. In Millman-Parker's system, the proof of Thales' theorem is based on the division of a line segment into equal parts.

\begin{theorem}\cite[p. 59]{F} \label{mill0} Any segment ${AB}$ can be divided into $ n $ equal parts.
\end{theorem}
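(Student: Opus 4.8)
The plan is to reduce the geometric statement to elementary arithmetic in $\mathbb{R}$ by exploiting the ruler. First I would fix a line $l$ through $A$ and $B$ (incidence axiom (i) provides one) and invoke the existence of a ruler $f\colon l\to\mathbb{R}$, so that $f$ is a bijection and $|f(P)-f(Q)|=d(P,Q)$ for all $P,Q\in l$. Writing $a=f(A)$ and $b=f(B)$, the congruence and betweenness relations on $l$ become, through $f$, nothing more than the absolute value and the linear order on $\mathbb{R}$. This is the whole point of the Millman–Parker setup: a ruler transports every metric question on a line into the arithmetic of real numbers.

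Next I would produce the dividing points by arithmetic. For $k=0,1,\dots,n$ set
\[ x_k = a + \frac{k}{n}\,(b-a), \]
so that $x_0=a$ and $x_n=b$. Since $f$ is a bijection, there are unique points $P_k\in l$ with $f(P_k)=x_k$; in particular $P_0=A$ and $P_n=B$. The numbers $x_k$ are the equally spaced subdivision points of the real interval determined by $a$ and $b$, and pulling them back along $f^{-1}$ is exactly Euclid's VI.9 rendered in coordinates.

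It then remains to verify that the $P_k$ do what is claimed, and this is a one-line computation using the ruler property. For indices $i<j$ we have
\[ d(P_i,P_j)=|x_j-x_i|=\frac{j-i}{n}\,|b-a|=\frac{j-i}{n}\,d(A,B). \]
Taking $j=i+1$ gives $d(P_{k-1},P_k)=\tfrac1n\,d(A,B)$ for each $k$, so the $n$ consecutive segments $P_{k-1}P_k$ are mutually congruent and together exhaust $AB$. Moreover, for $i<j<k$ one checks that $d(P_i,P_j)+d(P_j,P_k)=d(P_i,P_k)$, which is precisely the definition $P_i-P_j-P_k$ of betweenness in this system; hence $A-P_1-\dots-P_{n-1}-B$, and the points occur in the correct order along the segment.

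The main obstacle is not the computation, which is routine, but locating the one nontrivial ingredient: the existence of a ruler on $l$. Everything hinges on the postulate that each line admits a coordinate system, for it is this assumption that imports the full divisibility of $(\mathbb{R},+,\cdot)$ into the geometry and lets us solve for $x_k=a+\frac kn(b-a)$ inside the field of scalars. Once that is granted, dividing a segment into $n$ parts is no harder than dividing a real number by $n$, and the only care required is to confirm that the arithmetic relations among the $x_k$ translate back, via the defining properties of $d$ and the distance characterization of betweenness, into the intended congruence and ordering of the constructed points.
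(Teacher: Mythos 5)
Your proof is correct and follows essentially the same route as the paper: both invoke the ruler (coordinate system) postulate to transfer the problem to arithmetic in $\mathbb{R}$, produce the subdivision points as preimages of equally spaced real numbers, and read off the equal distances from the ruler property. Your version is slightly more explicit in writing the closed formula $x_k=a+\frac{k}{n}(b-a)$ and in verifying betweenness via the distance characterization, but the key ingredient and the structure of the argument are identical.
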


The proof employs the bijection between the straight line and the real numbers. Let $A$ and $B$ lie on the line \( l \). Setting \( A \) as the origin of the coordinate system, we divide the length of the segment, the real number \( d = d(A,B) \), by \( n \), obtaining the real number \( \frac{d}{n} \). Let ${AA_1}$ be preimage of \( \frac{d}{n} \). By successively marking segments of length \( \frac{d}{n} \), we obtain points $A_1, A_2,..., A_n$ such that $d(A_i,A_{i+1})=\frac dn$. 

\hfill{$\Box$}

In \cite{F}, it is given as an exercise; however, from the perspective of the techniques employed, it is crucial to understand how, in a given system of geometry, one can divide a line segment. 
Note that in the \textit{Elements}, the counterpart of this theorem, namely VI.9, is based on Thales' theorem. More precisely, in VI.9, Euclid marks off equal segments on one arm of an angle and, using VI.2, proves that parallel lines through the endpoints of these segments determine equal parts on the other arm. This is the key argument in Millman-Parker's proof of  Thales' theorem.

The next theorem provides a method for dividing a segment into equal parts without referencing real numbers.

\begin{theorem} \cite[p. 231]{F}\label{mill1}
Let $l_1$, $l_2$, $l_3$ be distinct parallel lines. Let $t_1$ intersect $l_1$, $l_2$, $l_3$ at $A$, $B$, $C$, respectively,  and let $t_2$ intersect $l_1$, $l_2$, $l_3$ at $D$, $E$ i $F$, respectively. If
${AB}\equiv {BC}$, then ${DE}\equiv {EF}$ .
\end{theorem}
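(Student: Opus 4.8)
The plan is to split the argument according to whether the transversals $t_1$ and $t_2$ are parallel, and in each case to reduce the claim to facts about parallelograms together with a triangle-congruence criterion already available in the system. Throughout, recall the labelling: $A,D$ lie on $l_1$, the points $B,E$ lie on $l_2$, and $C,F$ lie on $l_3$.

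If $t_1\parallel t_2$, the quadrilateral $ABED$ has $AB$ on $t_1$ and $DE$ on $t_2$ (so $AB\parallel DE$) and $AD$ on $l_1$ and $BE$ on $l_2$ (so $AD\parallel BE$); hence it is a parallelogram and $DE\equiv AB$. The same reasoning applied to $BCFE$ gives $EF\equiv BC$. The hypothesis $AB\equiv BC$ then yields $DE\equiv EF$ immediately.

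The substantive case is $t_1\nparallel t_2$. Here I would introduce an auxiliary construction: through $D$ draw the line parallel to $t_1$, meeting $l_2$ at a point $G$, and through $E$ draw the line parallel to $t_1$, meeting $l_3$ at a point $H$ (these intersections exist, since a line parallel to $t_1$ is parallel to neither $l_2$ nor $l_3$). As in the first case, $ABGD$ and $BCHE$ are parallelograms, so $DG\equiv AB$ and $EH\equiv BC$, whence $DG\equiv EH$. It then remains to compare the triangles $DGE$ and $EHF$. The side $DG$ is congruent to the side $EH$; the angles $\angle GDE$ and $\angle HEF$ are congruent as corresponding angles cut from the parallels $DG\parallel EH$ by the transversal $t_2$; and the angles $\angle DGE$ and $\angle EHF$ are congruent because $GE$ and $HF$ lie on the parallels $l_2\parallel l_3$ while $GD$ and $HE$ lie on lines parallel to $t_1$, so each is the angle between a pair of lines unchanged when each line is replaced by a parallel. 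Since $DG$ is the side included between these two angles in each triangle, the Angle--Side--Angle criterion (derivable from the SAS postulate adopted earlier) gives $\triangle DGE\equiv\triangle EHF$, and therefore $DE\equiv EF$.

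The main obstacle is the systematic use of the parallel theory: one must justify the two angle congruences from the theorems on corresponding (equivalently, alternate) angles determined by a transversal of two parallel lines, which in this framework rest on Playfair's axiom and the angle-measure postulate already established. Care is also needed to confirm that the auxiliary points $G$ and $H$ are well defined, that the betweenness/orientation data make the ASA match legitimate rather than a reflected one, and that the parallel case is genuinely the degenerate one (there the construction collapses, $G=E$), so that the two cases together exhaust all configurations.
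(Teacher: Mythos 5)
Your proof is correct, but it follows a genuinely different route from the one in the paper. The paper reproduces Millman--Parker's argument, which uses rectangles rather than parallelograms: it drops perpendiculars from $A$ and $D$ onto $l_2$ (feet $P$, $R$) and from $B$ and $E$ onto $l_3$ (feet $Q$, $S$), gets $\triangle APB\equiv\triangle BQC$ from $AB\equiv BC$ and the corresponding angles at $A$ and $B$, transfers the legs across via the equidistance of parallels ($AP\equiv DR$, $BQ\equiv ES$), and concludes with the congruence of the right triangles $\triangle DRE$ and $\triangle ESF$. Your construction --- auxiliary parallels to $t_1$ through $D$ and $E$, the parallelograms $ABGD$ and $BCHE$, and ASA on the oblique triangles $DGE$ and $EHF$ --- is instead the classical Euclid I.33--34 style proof, which the paper itself attributes (in \S\,9.1) to Borsuk--Szmielew while noting explicitly that ``Millman and Parker, instead of using parallelograms, apply rectangles.'' The trade-off: your parallelogram route needs the case split on $t_1\parallel t_2$ (where $G=E$, $H=F$ and the construction degenerates) and the two-parallel-pair angle lemma with its orientation caveats, all of which you correctly flag; the perpendicular-foot route avoids that case split and replaces the general ASA step by congruence of right triangles, at the cost of its own unexamined degenerate case (a transversal perpendicular to the $l_i$, where the feet coincide with $B$ and $C$). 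Both arguments rest equally on the parallel axiom through the corresponding-angles theorem, so neither is more economical in its hypotheses; yours has the merit of mirroring the construction the paper later reuses in \S\,9 for the nonstandard generalization.
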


\begin{figure}[!ht] 
\centering
\includegraphics[scale=0.7]{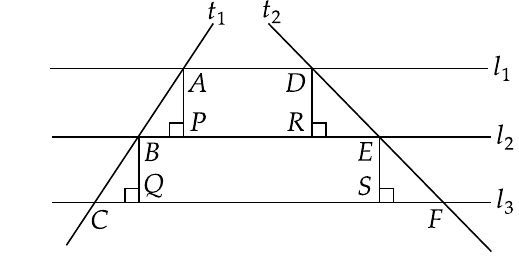}
\caption[p1] {Theorem \ref{mill1}, \cite{F}, p. 231.} \label{millman1}
\end{figure}

\begin{proof}

Let $P$ and $R$  be  feet of perpendiculars from $A$ and $D$ onto $l_2$, respectively, and $Q$ and $S$  be  feet of perpendiculars from $B$ and $E$ onto $l_3$, respectively; see Fig. \ref{millman1}. 
Since $AB\equiv BC$  and $\angle A=\angle B$, the right-angled triangles
$\triangle APB$ and $\triangle BQC$ are congruent.

The assumption on parallel lines implies
\[AP\equiv DR,\ \ BQ\equiv ES.\]

Since $AP\equiv BQ$, it follows that $DR\equiv BQ$.

Since $\angle D=\angle E$, the right-angled triangles
$\triangle DRE$ and $\triangle ESF$ are congruent, which implies that $DE\equiv EF$.
\end{proof}

From this point on, we apply the symbol $AB$ for the line segment or its length.  This convention simplifies notation, while the context always determines the correct meaning.

Here is Millman-Parker's proof of Thales' theorem. 
\begin{theorem} \cite[p. 231]{F} \label{millman4}
Let $l_1$, $l_2$ i $l_3$ be parallel lines. Let $t_1$ and $t_2$ be two transversals which intersect  $l_1$, $l_2$, $l_3$ at $A$, $B$, $C$ and $D$, $E$, $F$ with $A - B - C$ as shown in Fig. \ref{millman2}. Then 
$$\frac{BC}{AB} =\frac{EF}{DE}.$$
\end{theorem}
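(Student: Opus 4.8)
The plan is to reduce the general (possibly incommensurable) case to the equal-subdivision case settled by Theorem~\ref{mill1}, and then to pass to a limit using the division guaranteed by Theorem~\ref{mill0}. Using the coordinate systems supplied by the ruler postulate, set $a=AB$, $b=BC$ on $t_1$ and $a'=DE$, $b'=EF$ on $t_2$; the claim is the single identity $b/a=b'/a'$. The parallel projection along the common direction of $l_1,l_2,l_3$ maps $t_1$ onto $t_2$ and carries $A,B,C$ to $D,E,F$, so the task is to show that this projection scales all lengths by one common factor.

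First I would fix a positive integer $n$ and divide $AB$ into $n$ congruent parts by Theorem~\ref{mill0}, each of length $a/n$. Laying off copies of such a part consecutively along $t_1$ from $A$ produces points $A_0=A, A_1, \dots$ with $A_k$ at distance $k\,a/n$ from $A$ and $A_n=B$. Through each $A_k$ draw the line $m_k$ parallel to $l_1$. Applying Theorem~\ref{mill1} to every triple of consecutive parallels $m_{k-1}, m_k, m_{k+1}$ and chaining the resulting congruences shows that the intersection points $D_k=m_k\cap t_2$ are equally spaced, with $D_0=D$ and $D_n=E$; hence each step has length $a'/n$ and $D_k$ lies at distance $k\,a'/n$ from $D$.

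Next I would locate $C$. Since $C$ sits at distance $a+b$ from $A$, there is a unique integer $m$ with $A_m - C - A_{m+1}$ (or $A_m=C$), namely $m=\lfloor n(a+b)/a\rfloor = n+\lfloor nb/a\rfloor$. Because two distinct lines parallel to a common line cannot meet, the projection preserves the betweenness order along the transversal, so the image $F$ of $C$ satisfies $D_m - F - D_{m+1}$. Reading off distances from $D$ and using $DF=DE+EF=a'+b'$ yields the squeeze
\[ \frac{m-n}{n}\,a' \;\le\; b' \;<\; \frac{m+1-n}{n}\,a', \]
that is, $\lfloor nb/a\rfloor/n \le b'/a' < (\lfloor nb/a\rfloor+1)/n$. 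Both bounds tend to $b/a$ as $n\to\infty$, so $b'/a'=b/a$, which is precisely the asserted equality $BC/AB=EF/DE$.

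The main obstacle is the third step, not any algebra: one must justify that the parallel projection is strictly order preserving, i.e.\ that the equally spaced parallels $m_k$ meet $t_2$ in the same betweenness order in which they meet $t_1$, so that the image of $C$ is genuinely trapped between $D_m$ and $D_{m+1}$. This rests on the fact that distinct parallels do not cross, together with Pasch-type betweenness reasoning. Once this monotonicity is secured, the passage $n\to\infty$ is a routine squeeze that quietly uses the Archimedean, ruler-based structure of $\R$ provided by the ruler postulate and Theorem~\ref{mill0}.
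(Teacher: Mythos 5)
Your proof is correct and follows essentially the same route as the paper's: subdivide $AB$ into $n$ equal parts via Theorem~\ref{mill0}, transfer the equal subdivision to the other transversal via Theorem~\ref{mill1}, trap $C$ (your index $m-n$ is exactly the paper's $p=\lfloor n\,BC/AB\rfloor$) to get both ratios into the same interval of length $\tfrac1n$, and conclude by the Archimedean property. The one point where you go beyond the paper is in explicitly flagging and justifying that parallel projection preserves betweenness, a step the paper's proof uses silently when it asserts the inequality $p\frac{DE}{n}\leq EF<(p+1)\frac{DE}{n}$.
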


\begin{figure}[!ht] 
\centering
\includegraphics[scale=0.7]{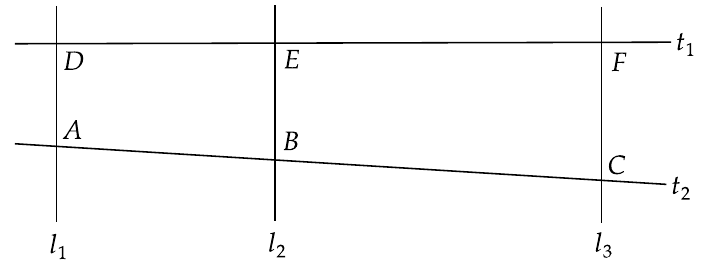}
\caption {Thales' theorem by Millman and Parker.} \label{millman2}
\end{figure}

\begin{proof} 
For the proof, it is shown that numbers $\frac{BC}{AB}$ and $\frac{EF}{DE}$ cannot differ, that is:
$$(\forall n \in \mathbb{N}) \left| \frac{BC}{AB}-\frac{EF}{DE} \right| <\frac{1}{n}.$$ 

This identity criterion is equivalent to the Archimedean axiom formulated in terms of an ordered $(\F,+,\cdot,0,1,<)$:
\[(\forall r\in \F_+)(\exists n\in\N)\,(\tfrac 1n<r). \]
or simply
\[\lim\limits_{n\rightarrow\infty}\frac 1n=0.\]

(1) For any $ n $, an integer $p$ is defined by:
\begin{equation}\label{mp1}p = \max\left\{k \in \mathbb{N} : k \leq \frac{nBC}{AB} \right\}.\end{equation}

From this definition, it follows that:
 \begin{equation}\frac{p}{n}\leq \frac{BC}{AB}< \frac{p+1}{n}, \label{0} \end{equation}
or, in an equivalent form that carries more geometrical significance:
\begin{equation} \label{1}   p\frac{AB}{n}\leq BC(p+1)\frac{AB}{n}. \end{equation}

Now, by Theorem \ref{mill0},  the segment ${AB}$ is divided into $n$ segments each of length $\frac{AB}{n}$, determining the points $A_1,  ..., A_{n-1}$.
 
Then, laying off $p + 1$ segments of length  $\frac{AB}{n}$ along ${BC}$,  determines points $B_1, B_2, ..., B_{p}$ on ${BC}$, with $B_{p+1}$ lying beyond $C$; see Fig. \ref{millman3}.

\begin{figure}[!ht] 
\centering
\includegraphics[scale=0.7]{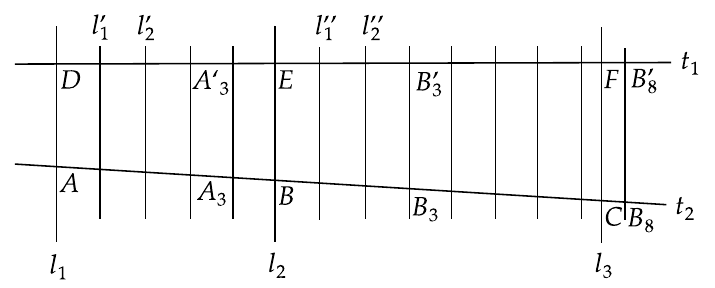}
\caption {Approximating ${BC}$ using segments of length $\frac{AB}{n}$ ($n=5$, $p=7$).} \label{millman3}
\end{figure}

(2) Let  $l_i'$ be the parallel to $l_1$ passing through $A_i$;  it meets $t_1$ at a point $A_i'$. 

Similarly, let  $l_j''$ be the parallel to $ l_1$ passing through $B_j$; it meets  $t_1$ at a point $B_j'$. 

By Theorem \ref{mill1}, the points $A_i'$  divide the $DE$ into equal segments each of length $\frac{DE}{n}$.

The segments ${B_j'B'_{j+1}}$ approximating the line segment ${EF}$, are all of the of length $\frac{DE}{n}$. Moreover,  inequalities analogous to  \ref{1} hold:
\begin{equation}p\frac{DE}{n}\leq {EF}<(p+1)\frac{DE}{n}.\label{2} \end{equation}
 
 (3) Due to the arithmetic of real numbers,  inequalities \ref{2} can be we  rewritten in the following form:
$$-\frac{p + 1}{n}< -\frac{EF}{DE}\leq -\frac{p}{n}.$$

Adding this to the inequality \ref{0} gives
$$-\frac{1}{n}<\frac{BC}{AB}-\frac{EF}{DE}<\frac{1}{n},$$
which implies that
$$\left| \frac{BC}{AB}-\frac{EF}{DE} \right| <\frac{1}{n}.$$
\end{proof}

Theorem \ref{millman4} relates to Euclid's version VI.2a. Version VI.2b could proceed in the same way as in Hilbert's or Birkhoff's system.

\section{From nonstandard approach to the hyperreal plane}
\subsection{Generalizing previous results}
1. The geometrical argument for Thales' theorem, as presented in the approaches of Borsuk and Szmielew, as well as Millman and Parker, relies on the observation that parallel lines cutting equal segments on one arm of an angle also cut equal segments on the other. We can summarize it as follows:

Let the arms $p, l$ of an angle be intersected by the parallel lines $t_1, t_2, t_3, t_4$, which cut segment $a, a'$ on $p$   and $b, b'$  on $l$.  When $a\equiv a'$, it follows that  $b\equiv b'$; see Fig. \ref{nsa1} (left): 
\[t_1\parallel t_2\parallel t_3\parallel t_4,\ a\equiv a'\Rightarrow b\equiv b'.\]
\begin{figure}[!ht] 
\centering
\includegraphics[scale=0.7]{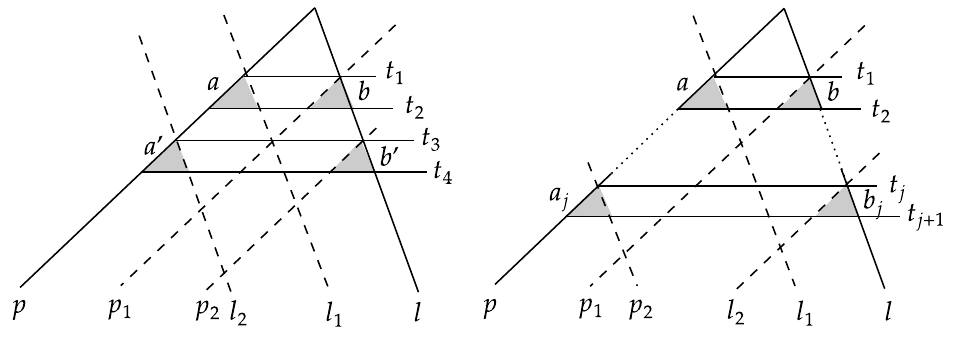}
\caption {Cutting equal segments on arms of an angle.} \label{nsa1}
\end{figure}

The proof is based on the properties of parallelogram stated in Euclid's Propositions I.33--34 \cite[p. 79]{blaszczyk2021}. Millman and Parker, instead of using parallelograms, apply rectangles; see Fig.  \ref{millman1}.

Note, however,  that this argument also applies to infinitely many parallel lines 
$\{t_j\}_{j\in J}$. That is, if parallel lines $t_j$, cutting arms $p, l$ of an angle in such way that segments $a_j$ on the arm $p$ are equal, then the corresponding segments $b_j$ on the arm $l$ are also equal; see Fig. \ref{nsa1} (right).

\pvn 2. With this construction, we can prove Euclid's VI.9 without referencing Thales' theorem. Suppose we want to divide the line segment  $CB$ into $m$ equal parts; see Fig. \ref{nsa2} (left). To achieve this, lay off $m$ segments  of length $\varepsilon$ along ray $AC$, and enumerate them as $(\varepsilon_j)_{j\leq m}$. Join $A$, the endpoint of $\varepsilon_m$, to $B$. By drawing parallels to $AB$ through the endpoints of $\varepsilon_j$, we obtain equal parts on segment $CB$.

\pvn 3. We can also reverse this argument. In Fig. \ref{nsa2} (left), $\varepsilon$ represents equal segments on side $AC$, 
while $\delta$ represents equal segments on side  $BC$. Let us enumerate these segments as $(\varepsilon_j)_{j\leq m}$ and   $(\delta_j)_{j\leq m}$, with $m\in\N$, and let $t_1$ join the endpoints of $\varepsilon_1$ and $\delta_1$. 

\begin{figure}[!ht] 
\centering
\includegraphics[scale=0.7]{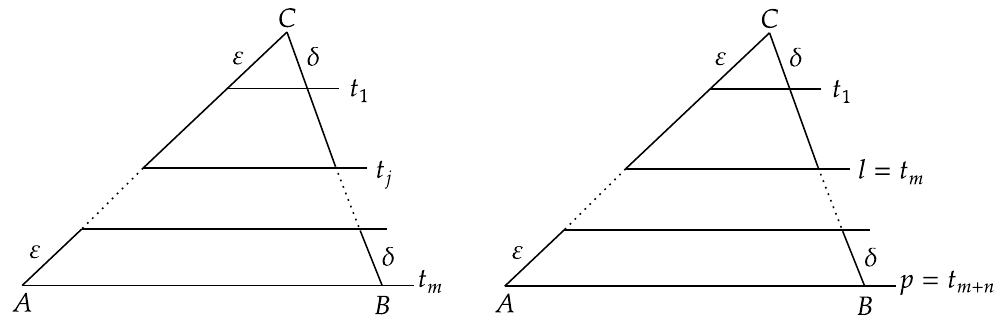}
\caption {Reversing Euclid's VI.9 (left). Co-measurable case of VI.2 (right).} \label{nsa2}
\end{figure}
 Lines  $(t_j)_{j\leq m}$ parallel to $t_1$ passing through endpoints of  $\varepsilon_j$ determine  equal segments on side $BC$. Since the first of these segments equals $\delta$, it follows that all segments on $BC$ are equal to $\delta$. Moreover, since $AB$ 
 runs through the endpoints of $\varepsilon_m$, $\delta_m$, the side $AB$ and the line $t_m$ coincide.

\subsection{Thales' theorem on the real plane $\R\times \R$}
 In the plane $\R\times\R$, a segment $AC$ has length $d$, and we assume there exist points 
$A_1, ..., A_n$ such that segments $AA_1, ..., A_iA_{i+1},..., A_nC$ are equal, each having length $\frac dn$.   

It what follows, we assume  $a, b, c, d, \varepsilon, \delta$ 
denote both segments and their lengths, initially as real numbers.

\pvn 1. Let us first  consider a commensurable case of  Thales' theorem:  

\begin{theorem} In the triangle  $\triangle ACB$,
    if parallel lines $l, p$ cut line segments $a, b$ and $c, d$ on the sides  $AC$ and $BC$ of the triangle,  respectively, and there exists a segment $\varepsilon$ such that
$\frac ab= \frac{m\varepsilon} {n\varepsilon}$, with $m,n\in\N$, then it follows that $\frac ab=\frac cd$:
\[l\parallel p,\ \ \frac ab= \frac{m\varepsilon} {n\varepsilon} \Rightarrow \frac ab=\frac cd,\ \ \ \mbox{given}\ \ \ a, b, c, d, \varepsilon\in\R_+.\]
\end{theorem}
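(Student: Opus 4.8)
The plan is to reduce the statement to the elementary counting principle recalled in the preceding subsection: parallel lines cutting congruent segments on one arm of an angle also cut congruent segments on the other arm, a fact that holds for any finite family of parallels. Because the hypothesis makes $a$ and $b$ commensurable, no limiting or Archimedean argument is needed; the proof is pure subdivision and counting, with $\varepsilon$ on $AC$ playing the role that a single segment $\delta$ will play on $BC$.

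First I would use $\frac ab=\frac{m\varepsilon}{n\varepsilon}$ to mark a common measure along $AC$. Subdividing, the segment $a$ becomes a chain of $m$ copies of $\varepsilon$ and $b$ a chain of $n$ copies, with the point separating $a$ from $b$ --- that is, the intersection of $p$ with $AC$ --- falling on a division mark. The portion of $AC$ cut by $p$ and $l$ thus carries $m+n$ equally spaced division points, and the endpoints of $a$ and of $b$, being the traces of $p$ and $l$ on $AC$, are among them.

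Next I would transport this subdivision to $BC$. Through each $\varepsilon$-division point I draw the parallel to $l$ (equivalently to $p$, since $l\parallel p$), obtaining a finite family of parallels that cut congruent segments $\varepsilon$ on $AC$. By the counting principle they cut congruent segments on $BC$, each congruent to one fixed segment $\delta$. Since $p$ and $l$ themselves belong to this family, their traces on $BC$ are exactly the points delimiting $c$ and $d$; counting the $\delta$-pieces then gives $c\equiv m\delta$ and $d\equiv n\delta$, so that
\[\frac cd=\frac{m\delta}{n\delta}=\frac mn=\frac ab.\]

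The step demanding the most care is the alignment of endpoints: one must verify that the points bounding $c$ and $d$ on $BC$ are precisely the images of the points bounding $a$ and $b$ on $AC$. This is secured by including $p$ and $l$ among the transported parallels, so the coincidence is built in rather than assumed. Everything else is the finite additivity of chains of congruent segments, and once the subdivision is aligned the identity $\frac ab=\frac cd$ follows by a direct count.
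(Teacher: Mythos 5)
Your proof is correct and follows essentially the same route as the paper's: subdivide $AC$ into $m+n$ copies of $\varepsilon$, draw parallels to $l$ through the division points, invoke the principle that parallels cutting congruent segments on one arm of an angle cut congruent segments on the other, and count the resulting $\delta$-pieces to get $c=m\delta$, $d=n\delta$. Your explicit check that the traces of $p$ and $l$ land on division points is a careful touch the paper leaves implicit, but it is the same argument.
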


\begin{proof}
 Marking off $m+n$ segments $\varepsilon$ on side $AC$, we draw $m+n$ lines through the endpoints of these segments, parallel to $l$. The first parallel determines a segment $\delta$, and $BC$ is divided into $m+n$ segments, each of length $\delta$.

Since $c=m\delta$ and $d=n\delta$, it follows that
\[\frac ab= \frac{m\varepsilon} {n\varepsilon}=\frac{m\delta} {n\delta}= \frac cd.\]   
\end{proof} 
Fig. \ref{nsa2} (right) illustrates this proof.

Although the proof is simple, in the next section, we will generalize it by taking hyperintegers $K, L$, instead of integers $m, n$, that is, special infinite numbers.

\pvn 2. The above observations enable us to apply techniques from nonstandard analysis to prove Thales' theorem while remaining within the real plane 
$\R\times\R$. To this end, we briefly sketch the basics of hyperreal numbers in this section. The crucial concept is that of hyperfinite integers, which enable us to reconstruct arguments \ref{mp1}--\ref{2}, as well as the argument developed in \S\,9.1. 

The ordered field of hyperreals, or nonstandard real numbers, \linebreak {$(\mathbb R^*,+,\cdot,0,1,<)$} is the extension of real numbers, where $\mathbb R^*=\mathbb R^\mathbb N/\mathcal U$, with $\mathcal U$ being a non-principial ultrafilter on $\mathbb N$ \cite{blasczyk2021Galileo,goldblatt1998}. 
Thus, a hyperreal number is represented by an equivalence class determined by a sequence of real numbers, 
 \[[(r_1,r_2,...)]\in\mathbb R^*.\]
 
A constant sequence $[(r,r,..)]$ represents the standard real number $r$.
 
 The absolute value is defined in the same way as in any ordered field.
 
We define  the class of infinitely small, infinitely large, and limited numbers  as follows (where $n$ ranges over $\mathbb N$):
 \[x\in\Omega\Leftrightarrow (\forall n)(|x|<\tfrac 1n), 
 \ \   x\in \Psi\Leftrightarrow (\forall n)(|x|>n),\ \  x\in \mathbb L\Leftrightarrow (\exists n)(|x|<n).\]

One can easily verify the following relationships:
\[\Omega+\Omega,\ \Omega\cdot \Omega\subset \Omega,\ \ \ \Omega\cdot \mathbb L\subset \Omega,\ \ \ \mbox{and}\ \ x\in\Omega\Leftrightarrow x^{-1}\in \Psi,\ \ x\neq 0.\]

Due to these definitions, any positive real number 
$r$ is greater than any infinitesimal hyperreal number.


The set $\Psi$ includes the set of hyperintegers $\mathbb N^*$, which extends the set of natural numbers. The structure  $(\mathbb N^*, +, \cdot,0,1)$ forms a nonstandard (uncountable) model of Peano arithmetic. The elements of $\mathbb N^*$, denoted below as $K, L$,  are represented by equivalence classes of sequences of natural numbers, such as $[(n_1,n_2,...)]$. 

\pvn 3. Returning to geometry, let the angle $\angle ACB$ be placed in the real plane $\R\times\R$. Let $ l$ and $p$ intersect its arms, forming the line segments  $a, b$ on $AC$ and $c, d$ on $BC$, respectively. We assume that $a, b, c, d\in \R$, and we use these symbols to denote both the segments and their lengths.  Our goal is to prove the following implication (see Fig. \ref{fignsa4.6}):
\[l\parallel p\Rightarrow \frac ab=\frac cd.\]

\begin{figure}[!ht] 
\centering
\includegraphics[scale=0.7]{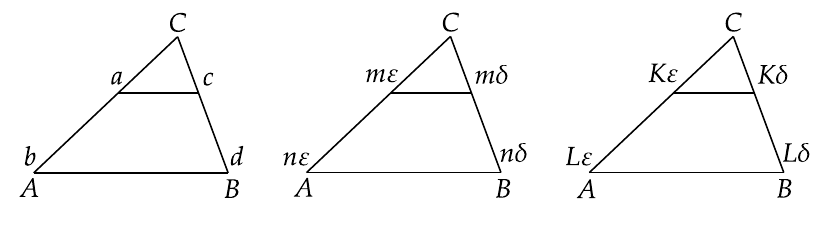}
\caption {Thales' theorem from co-measurable case (middle) to nonstandard approach (left).} \label{fignsa4.6}
\end{figure}

We consider two cases: (a) co-measurable and (b) non-comeasurable.

(Ad a) By co-measurable line segments in nonstandard sense, we mean that there exists an infinitesimal $\varepsilon$ such that 
$$a=K\varepsilon,\ \ \ b=L\varepsilon,$$
where  $K, L\in\N^*$, and $\varepsilon\in\Omega$. 

First, on the arm $AC$, we set  $K+L$ segments $\varepsilon$. Let us enumerate them as in the section \S\,9.1(2) above $\{\varepsilon_j: j\leq K+L\}$.

Let $t_1$ be  parallel to $l$, passing through the endpoint of the first one,  $\varepsilon_1$. It determins the triangle with vertex $C$ and sides of lengths $\varepsilon$ and $\delta$; see Fig. \ref{fignsa4} (left). 
By the previous considerations, these parallels cut $K+L$ equal segments on $BC$, each with length $\delta$.

As in the section \S\,9.2(1), due to the arithmetic of an ordered field, we obtain
\begin{equation}\frac ab= \frac{K\varepsilon} {L\varepsilon}=\frac{K\delta} {L\delta}= \frac cd.\label{nsa3.5}\end{equation}

(Ad b) Non-commessurable case. Suppose $a$ and $b$ are not co-measurable. Let $K$ be any hyperinteger, and set
\[\varepsilon= \frac aK.\]

Since a standard real number $a$ is a limited hyperreal, $a\in \mathbb L$, the number $\varepsilon$ is infinitesimal, $\varepsilon\in\Omega$, and we can express $a$ as $a=K\varepsilon$.

Then,  for some hyperinteger $L$, the following inequalities hold:\footnote{
By an additional argument, we could show that $L\varepsilon< b$; however, this is not necessary.}
\begin{equation}\label{nsa3} L\varepsilon\leq b< (L+1)\varepsilon.\end{equation}

Similarly to \ref{mp1}, within the hyperintegers,  such an $L$ exists.  

Indeed, given $K=[(k_1,k_2,...)]$ and $a$ is represented by the equivalence class $[(a,a,...)]$, we have 
\[\varepsilon=[(\tfrac a{k_1}, \tfrac a{k_2}, ...)]. \]

For each $j$, there exists an integer $l_j$,  such that
\[l_j\frac a{k_j}\leq b <(l_j+1)\frac a{k_j}, \ \ \  l_j\in\N.\]

Setting $L=[(l_1,l_2,...)]$, we obtain \ref{nsa3}.

\hfill{$\Box$}
\begin{figure}[!ht] 
\centering
\includegraphics[scale=0.7]{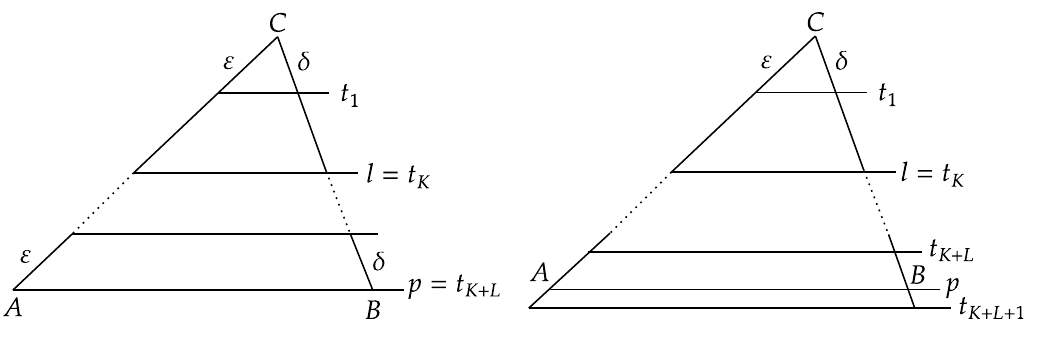}
\caption {Thales' theorem in nonstandard approach.} \label{fignsa4}
\end{figure}

Now, suppose the equality $\frac ab=\frac cd$ does not hold. Then the equality  $\frac ba=\frac dc$ also does not hold. Suppose  $\frac ba$ is greater:
\begin{equation}\label{sigma1}\frac ba-\frac dc=\sigma, \ \ \ \sigma\in\R_+.\end{equation}

On the ray $AC$, we set  $K+L+1$ segments $\varepsilon$. Let us enumerate them as  $\{\varepsilon_j: j\leq K+L+1\}$.

Let $t_1$ be  parallel to $l$, passing through the endpoint  of $\varepsilon_1$. It determins the line segment  $\delta$ on side $BC$.

Lines $t_j$ parallel to $t_1$, are drawn through the endpoints of the segments $\varepsilon_j$, determining the line segments $\delta_j$ on arm $BC$, each equal to $\delta$; see Fig. \ref{fignsa4}.  

It follows that:
\[(K+L)\varepsilon < AC < (K+L+1)\varepsilon,    \]
accordingly, 
\[(K+L)\delta < BC< (K+L+1)\delta. \]

Moreover,
\begin{equation}a=K\varepsilon ,\ \ L\varepsilon < b<(L+1)\varepsilon,\ \ \ c=K\delta,\ \ L\delta< d< (L+1)\delta.\label{nsa4}
\end{equation}

From \ref{sigma1} and \ref{nsa4}, it follows:
\begin{equation}\sigma=\frac ba-\frac dc< \frac {L+1}K-\frac LK=\frac 1K.\label{nsa4.5}\end{equation}

Since $\frac 1K$ is infinitesimal, it contradicts the assumption that $\sigma$ is a positive real number. 

\hfill{$\Box$}

\subsection{Thales' theorem on the hyperreal plane $\R^*\times \R^*$}

\pvn 1. The arguments developed in the previous subsection translate to the hyperreal plane  $\R^*\times \R^*$. 
 
 Suppose $a, b, c , d$ are positive hyperreal numbers, $a, b, c , d\in\R^*_+$.

To prove Thales' theorem (VI.2a), we consider two cases: (a) line segments $a, b$ are co-measurable (in the nonstandard sense), (b)  line segments $a, b$ are  non-comeasurable.

(Ad a)  The co-measurable case proceeds analogously to the previous one and concludes with formula \ref{nsa3.5}.

(Ad b) Suppose the equality $\frac ba=\frac dc$ does not hold and $\frac ba$ is greater:
\begin{equation}\label{sigma}\frac ba-\frac dc=\sigma, \ \ \ \sigma\in\R^*_+.\end{equation}

Whether $\sigma$ is infinitesimal or limited, we can find an infinitely large number $K$ such that the inequality holds:
\begin{equation}\label{nsa5}\frac 1 K< \sigma. \end{equation}

Indeed, given 
\[  \sigma=[(r_1,r_2,..)],\]
for each index $j$, due to the Archimedean axiom, there exists an integer $k_j$ such that the inequality holds
\[\frac 1{r_j}<k_j.\]
 Setting
\[K=[(k_1,k_2,...)],\]
we obtain
\[\frac 1\sigma<K,\]
or \ref{nsa5}.
Then, with this $K$, we reiterate arguments \ref{nsa3} to \ref{nsa4.5}.

Specifically, given
\[a=[(a_1, a_2,...)],\ \  b=[(b_1, b_2,...)],\ \ \varepsilon=[(\tfrac{a_1}{k_1}, \tfrac{a_2}{k_2},...)],\]
we find $l_j$ such that
\[l_j\tfrac{a_j}{k_j}\leq b_j<(l_j+1)\tfrac{a_j}{k_j},\]
and set
\[L=[(l_1,l_2,...)].\]

As a result, the contradiction follows:
\[\sigma=\frac ba-\frac dc<  \frac 1K<\sigma.\]
\hfill{$\Box$}
\pvn 2.  Arguments developed above in \S\,9 rest, in part, on arithmetic, in part, on geometry. Let us summarize them.

Suppose $l\parallel p$ and $\frac ab \neq\frac cd$.  Let
\[0<\sigma=\frac ba-\frac dc.\]

There exists an integer $k$ such that 
\[\frac ba-\frac dc<\frac 1k<\sigma.\]

This leads to a contradiction: 
\[\sigma<\frac 1k< \sigma.\]

Let us note, that we can formalize this as  a sentence:
\begin{equation}\tag{T}(\forall a, b, c, d\in \R)(\exists k\in\N)(\tfrac ba-\tfrac dc<\tfrac 1k< \tfrac ba-\tfrac dc).\end{equation}

 Statement T is true in Euclidean geometry over real plane $\R\times \R$.

By the transfer principle (\cite{goldblatt1998}, ch. 4) it is also true
in Euclidean geometry over hyperreal plane $\R\times \R$:
\begin{equation}\tag{T*}(\forall a, b, c, d\in \R^*)(\exists k\in\N^*)
(\tfrac{b}{a}-\tfrac{d}{c}<\tfrac{1}{k}< \tfrac{b}{a}-\tfrac{d}{c}).
\end{equation}

As for the arguments justifying T,  they can also be formalized in a similar manner -- for example, using Tarski-style geometry -- in a form  transferable from geometry over $\R\times \R$ to geometry over $\R^*\times \R^*$. 

\section{Area Method}

In reconstructing Euclid's theory of similar triangles, the 20th-century systems, instead of Thales' theorem, adopt or prove some other propositions from Book VI.

Hilbert base his theory of proportion on Proposition VI.4, while its proof explored the reverse of Euclid's III.21. Birkhoff adopts VI.6 as an axiom.

Borsuk and Szmielew, as well as Millman and Parker, based their approach on VI.9 and the arithmetic of real numbers. Borsuk and Szmielew, in particular, explore the density of dyadic numbers in 
$(\R,<)$, while Millman and Parker apply another version of the Archimedean axiom, 

In Section 9, we generalize these arguments and show that they can be developed in the non-Archimedean field of hyperreal numbers.

All these systems interpret Euclidean proportion as a equality of divisions.
Moreover, whether in the arithmetic of line segments or real numbers, they explore the concept of the fourth proportional. Although the idea of the fourth proportional arises from Greek mathematics, Euclid did not refer to it in Book VI.

In this final section, we present an approach that adopts VI.1 as an axiom. Then, the proof of Proposition VI.9 aligns with Euclid's original proof.

\subsection{Area method}

The area method, pioneered in \cite{C},  is a~technique of proving theorems and constructing solutions in Euclidean geometry.  \cite{E}  provides its axiomatic description.  In \cite{ref_BP}, we presented a~model for these axioms.

From the perspective of  formal systems, the language of the area method includes one kind of variables, and symbols of  a~binary, $\overline{{\phantom{ab}}}$, and a~ternary function, $S$. We also need  the language of a commutative  field characteristic $0$, that is,  symbols of binary functions, $+,\ \cdot$ (sum and product), and unary functions $-, ^{-1}$ (an opposite and inverse element),  as well as constants $0,1$, and finitely many constants and $r$. 

Less formally, there are three primitive notions in the area method: point,  length of a~directed segment, 
and a~signed area of a~triangle. An ordered pair of points is called a~directed segment, an ordered triple -- a~triangle. In what follows, capital letters $A$, $B$, $C$, etc., stand for points.
The length of a~ directed segment, $\overline{AB}$, in short, is an element  of an ordered field. Similarly, the signed area of a~triangle,   $S_{ABC}$,  in short, is an element of the ordered field. $\overline{AB}$ and  $S_{ABC}$  can be positive, negative, or zero and they are  processed in the arithmetic of a commutative field. 

To model Euclidean geometry, we need some definitions that we apply in axioms.

\begin{definition}
Points $A, B, C$ are collinear iff $S_{ABC}=0$.
\end{definition}

\begin{definition}

Two segments   $AD$ and $BC$, where $A\neq D$ and $B \neq C$, are   parallel,  iff $S_{ABC}=S_{DBC}$. For this relation, we adopt the standard symbol $AD \parallel BC$.

\end{definition}

\begin{definition}
For three points $A$, $B$ and $C$, the Pythagorean difference, denoted by $P_{ABC}$, is defined by
$$P_{ABC}=\overline{AB}^2+\overline{BC}^2-\overline{AC}^2.$$

\end{definition}

\begin{definition}
Two segments   $DB$ and $CA$, where $D\neq B$ and $C \neq A$, are   perpendicular
 iff $P_{DCA}=P_{BCA}$. This relation is denoted by $DB \perp CA$.

\end{definition}

Here are the axioms for the area method \cite{E}.

 A1. $ \overline{A B}=0$ if and only if $ A$ and $B$ are identical.

A2. $S_{ABC} = S_{CAB}$.

A3. $S_{ABC} = -S_{BAC}$.

A4. If $S_{ABC} = 0$, then $\overline{AB} +\overline{BC} = \overline{AC}$ (Chasles' axiom).

A5. There are points $A$, $B$ and $C$ such that $S_{ABC}\ne 0$ (not all points are collinear).

A6. $ S_{ABC} = S_{DBC} +S_{ADC} +S_{ABD}$ (all points are in the same plane).\footnote{The idea of signed area  originates from Hilbert's \textit{Foundations of Geometry}.  In \cite[ch. 5]{ref_DH72}  he proves the theorem that is a~counterpart of axiom A6.}

A7. For each element $ r$ of $F$, there exists a~point $P$, such that $S_{ABP} = 0$ and $\overline {AP} = r \overline{AB}$ (construction of a~point on a~line).

A8. If $A\ne B$, $S_{ABP} = 0,\overline {AP} = r\overline{AB}$, $S_{ABP'} = 0$ and $\overline{AP'} = r\overline {AB}$, then $P = P'$.

A9. If $PQ \parallel CD$ and $ \frac{\overline {PQ}}{\overline{CD}}=1$, then $DQ \parallel PC$ (Euclid's proposition I.33).

A10. If $S_{PAC} \ne 0$ and $S_{ABC} = 0$, then $\frac{\overline{AB}}{\overline{AC}}=\frac{S_{PAB}}{S_{PAC}}$ (Euclid's proposition VI.1).

A11. If $C\ne D$ and $AB \perp CD$ and $EF \perp CD$, then $AB \parallel EF$.

A12. If $A\ne B$, $AB \perp CD$ and $AB \parallel EF$, then $ EF \perp CD$.

A13. If $FA \perp BC$ and $S_{FBC} = 0$, then $4 \cdot S_{ABC}^2 =\overline {AF} ^2 \overline {BC} ^2 $ (formula for the area of a~triangle).

The schemes of Euclid's proof and the area method proof are almost identical. The only difference is that within the area method, one must respect the order of the endpoints of line segments and the vertices of triangles, whereas Euclid arbitrarily permutes the names of a triangle's vertices in his proofs.

Below, we present a proof of Thales' theorem within the area method (see Fig. \ref{fig7}) \cite{ref_BP}.

(1) From the assumption $DE \parallel BC$, by definition, we obtain the equality of signed areas $S_{DEB}=S_{DEC}$.

(2) By the arithmetic of the filed:
$$\frac{S_{DEB}}{S_{DAE}}=\frac{S_{DEC}}{S_{DAE}}.$$

 (3) By A2, we can permute the names of vertices. Then   by A10, the following equalities hold:
$$\frac{S_{BDE}}{S_{DAE}}=\frac{\overline{BD}}{\overline{DA}}, \ \ \ \frac{S_{CDE}}{S_{DAE}}=\frac{\overline{CE}}{\overline{EA}}.$$

(4) By transitivity of equality, we obtain,

$$\frac{\overline{BD}}{\overline{DA}}=\frac{\overline{CE}}{\overline{EA}}.$$

\hfill{$\Box$}
\subsection{GCLC prover}

The Area Method enables the mechanization of Euclid's propositions. If understanding arguments in synthetic geometry involves grasping the axioms and rules of inference, then in automated proofs, it involves eliminating points. Elimination lemmas specify this procedure. Given that, an automated proof proceeds as follows:
\begin{enumerate}
\item 	The thesis of a theorem is translated into an expression in the Area Method language. 
\item	Given some starting points, new points are introduced, one by one, through the allowed constructions (construction stage). 
\item	Each point introduced in the construction stage is eliminated based on elimination lemmas, but in reverse order, i.e., the last constructed is the first in the elimination process, etc. (elimination stage). 
\item	The process reaches identity $1=1$ or $0=0$ and stops.
\end{enumerate}

Since the point elimination method is suitable for algorithmization, it is used to create programs for automatic proving theorems, the so-called provers. An example of such a prover is GCLC \cite{GCLC}. It is a tool for visualizing and creating mathematical drawings, and automatically proving geometric theorems. Prover GCLC generates traditional proofs based on the geometric properties of objects: the coordinates of the entered points are not taken into account in the automatic proof. Point, area, and segment are primitive concepts; they do not have any numerical expressions in the automatic proof; they are symbols. In this sense, it automatically produces synthetic proofs for geometric theorems, which justifies its use in proving theorems of Euclid’s geometry. 

Thales' theorem, as well as most of the propositions from Book VI of Euclid's Elements, have been successfully proven using GCLC. Below is an example of such a proof, illustrating how the Area Method can be applied effectively within the system to automate these classical geometric arguments.

 \begin{figure}[!h]
 \centering
\includegraphics[scale=0.65]{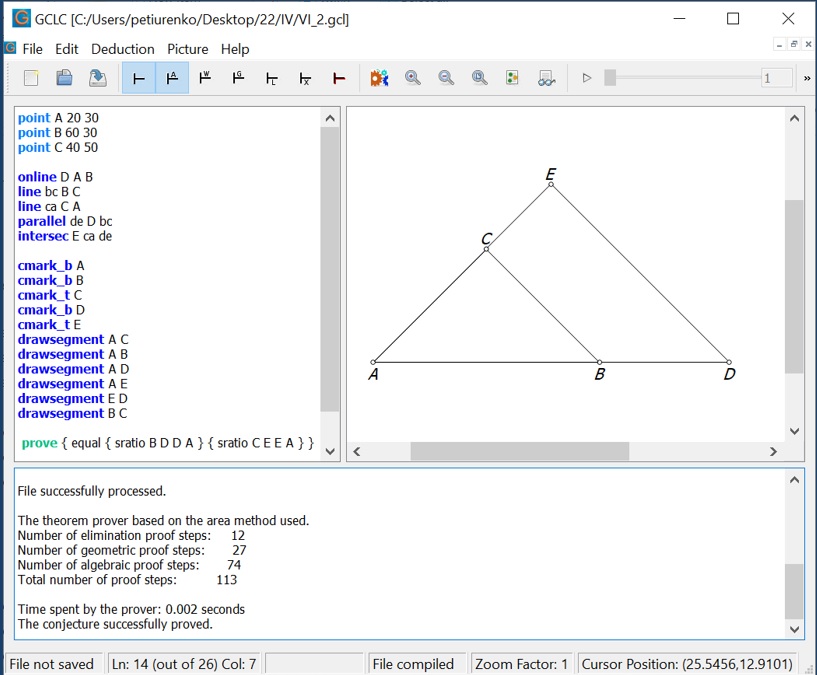}
\caption{Theorem VI.2, construction at GCLC} \label{th2}
\end{figure}

\begin{proof}
Case 1. Let $DE$ be drawn parallel to one of the sides $BC$ of triangle $ABC$. We need to show that as $BD$ is to $DA$, so $CE$ is to $EA$.

Automatic proof will be performed only in the case when only allowed constructions are used in the geometric interpretation of the theorem:
\begin{itemize}
\item point – defines any point;
\item	line – defines a straight line that passes through two points;
\item	intersect – defines the point of intersection of two lines;
\item	online – defines a point on the line;
\item	midpoint – defines a point as the middle of the segment;
\item	parallel – defines a line parallel to the given line passing through the given point;
\item	towards – division of the line segment in the given ratio;
\item	translate – translation a line segment;
\item	foot – defines a line perpendicular to a given line passing through a given point outside the line;
\item	perp – defines a line perpendicular to the given line passing through a given point on the line.
\end{itemize}

 Construction steps:
\begin{lstlisting}[breaklines]	
	point A 20 30 
	point B 60 30      
	point C 40 50  
	online D A B
	line bc B C
	line ca C A
	parallel de D bc
	intersec E ca de
\end{lstlisting}

The coordinates of the entered points are not taken into account in the automatic proof. Point, area, and segment are primitive concepts; they do not have any numerical expressions in the automatic proof; they are symbols.

Theorem thesis for case 2 in terms of automatic proof: 

\begin{lstlisting}[breaklines]
prove { equal { sratio B D D A } { sratio C E E A } }}
\end{lstlisting}

In ordinary mathematical language, it can be written like this:
 $$ \frac{\overline{BD}}{\overline{DA}}= \frac{\overline{CE}}{\overline{EA}}.$$
 
Case 2. Let the sides $AB$ and $AC$ of triangle $ABC$ be cut proportionally such that as $BD$ is to $DA$, so $CE$ is to $EA$. Let $DE$ have been joined. Prove that $DE$ is parallel to $BC$. 

This case is a~bit more complicated. We have the following construction steps:
\begin{lstlisting}[breaklines]	
	point A 20 30 
	point B 60 30      
	point C 40 50  
	towards D A B 0.3
	towards E A C 0.3
\end{lstlisting}

Theorem thesis for case 2 in terms of automatic proof:

 \begin{lstlisting}[breaklines] 
 prove { parallel D E B C } 
 \end{lstlisting}

In ordinary mathematical language, it means $DE\parallel BC$.

In the towards command we introduce a~concrete 0.3 parameter which means that $\frac{AD}{DB} = \frac{AE}{EC} = \frac{1}{3} $. This may mean that the proof we get is not general. Let's analyze the automatic proof GCLC:
$$S_{DBC}{=}{ S_{EBC}}$$
$$S_{DBC}{=}{ S_{BCE}}$$
$$S_{DBC}{=}{ ( S_{BCA} +  ( 0.3  \cdot  ( S_{BCC} +  ( -1  \cdot  S_{BCA}))))}$$
$$S_{BCD}{=}{ ( S_{BCA} +  ( 0.3  \cdot  ( 0  +  ( -1  \cdot  S_{BCA}))))}$$
$$S_{BCD}{=}{ ( 0.7  \cdot  S_{BCA})}$$
$$( S_{BCA} +  ( 0.3  \cdot  ( S_{BCB} +  ( -1  \cdot  S_{BCA})))){=}{ ( 0.7  \cdot  S_{BCA})}$$
$$( S_{BCA} +  ( 0.3  \cdot  ( 0  +  ( -1  \cdot  S_{BCA})))){=}{ ( 0.7  \cdot  S_{BCA})}$$
$$0 =0$$

If we change 0.3 to $ r $ and 0.7 to $ 1-r $, where $ r \in \mathbb {R} $, the proof will not change, and hence proof is general.

\end{proof}

Note: If we look at the first line of the proof, we can see that the prover reformulated the parallelity to the equality of two areas of triangles (because this is exactly how the parallelity of two lines is defined in the area method). Hence, the thesis of theorem VI. 1 case 2 can be formulated as 
\begin{lstlisting}[breaklines]
proof {equal {signed_area3 DBC} {signed_area3 EBC}}.
\end{lstlisting}

In ordinary mathematical language, it means $S_{DBC}=S_{EBC}$.

 The proof will remain the same if we use the above thesis.

\section{Conclusions}
Let us go back to our schematic reconstruction of Euclid's proof of Thales theorem discussed in \S\,2.4:
\[l\parallel p \xrightarrow[I.37]{} T_1=T_2 \xrightarrow[V.7]{} \frac{T_1}{T}=\frac{T_2}{T} \xrightarrow[VI.1]{}\frac{b}{a}=\frac{d}{c},\]
and 
\[\frac{b}{a}=\frac{d}{c} \xrightarrow[VI.1]{} \frac{T_1}{T}=\frac{T_2}{T}\xrightarrow[V.9]{} T_1=T_2\xrightarrow[I.39]{} l\parallel p. \]

In a more schematic form, it is a relationship between parallelism and proportion of line segments:
\[l\parallel p \xleftrightarrow[]{}\frac{b}{a}=\frac{d}{c},\]
or in a Euclidean stylization:
\[l\parallel p \xleftrightarrow[]{} {b}:{a}::{d}:{c}.\]

 20th-century systems reinterpreted the concept of proportion 
\mbox{${b}:{a}::{d}:{c}$}  in terms of the arithmetic of line segments or real numbers. Due to this change, they omit reference to Proposition VI.1 and the mixed proportion involved in that proposition -- namely, the proportion between triangles and line segments, as presented in Section \S\,2.1. They also omit a proportion between triangles, as shown in the above scheme.

Taking VI.1 as an axiom, the Area Method, on one hand, renews the Euclidean technique of proportion, and on the other hand, makes it mechanical, bringing it into 21st-century mathematics. In a way, it realizes the dream of Ian Mueller, a great admirer of Greek mathematics, as he stated:

\pvn ``Since VI.1 is the only important use of Eudoxus' definition in book VI
[Definition 5, Book V], it is clear that any theory enabling one to prove VI.1 and standard laws of proportion would suffice as a basis for book VI" \cite[p. 156]{IM}.

\bibliographystyle{agsm}
\bibliography{biblio}

@book{ref_BSz72,
	author    = {Borsuk, K. and Szmielew, W.},
	title     = {Podstawy Geometrii},
	publisher = {PWN},
	address   = {Warszawa},
	year      = {1972}
}

@book{ref_RH,
	author    = {Hartshorne, R.},
	title     = {Geometry: Euclid and Beyond},
	publisher = {Springer},
	address   = {New York},
	year      = {2000}
}

@article{ref_DH99,
	author    = {Hilbert, D.},
	title     = {Grundlagen der Geometrie},
	journal   = {Festschrift zur Feier der Enth\"{u}llung des Gauss-Weber-Denkmals in G\"{o}ttingen},
	publisher = {Teubner},
	address   = {Leipzig},
	year      = {1899},
	pages     = {1--92},
	note      = {In: K. Volkert (Hrsg.), David Hilbert, \textit{Grundlagen der Geometrie} (Festschrift 1899), Springer, Berlin 2015}
}

@book{ref_DH72,
	author    = {Hilbert, D.},
	title     = {Grundlagen der Geometrie},
	edition   = {11},
	publisher = {Stuttgart},
	year      = {1972}
}

@book{dh1950,
  author    = {David Hilbert},
  title     = {The Foundations of Geometry},
  year      = {1950},
  translator = {E. J. Townsend},
  publisher = {Open Court Publishing Company},
  address   = {La Salle, IL}
}

@article{E,
	author    = {Jani\v{c}i\'{c}, P. and Narboux, J. and Quaresma, P.},
	title     = {The Area Method: a Recapitulation},
	journal   = {Journal of Automated Reasoning},
	publisher = {Springer Verlag},
	volume    = {48},
	number    = {4},
	year      = {2012},
	pages     = {489--532}
}

@book{F,
	author    = {Millman, R. and Parker, G.},
	title     = {Geometry: A Metric Approach with Models},
	publisher = {Springer},
	address   = {Berlin},
	year      = {1991}
}

@book{C,
  author    = {Shih-Chun Chou and Xiao-Shan Gao and Jing-Zhong Zhang},
  title     = {Machine Proofs in Geometry},
  publisher = {World Scientific},
  year      = {1994},
  address   = {Singapore},
}

@article{ref_BP,
  author    = {Piotr B{\l}aszczyk and Anna Petiurenko},
  title     = {Euclid's proportion revised},
  journal   = {AUPC},
  volume    = {290},
  pages     = {37--61},
  year      = {2019},
}

@book{blaszczyk2025,
  author    = {Piotr B{\l}aszczyk},
  title     = {Ciągłość i liczby rzeczywiste: Eudoxos-Dedekind-Conway},
  publisher = {WN UKEN},
  address   = {Kraków},
  year      = {2025}
}

@article{bir1932,
  author = {Birkhoff, George D.},
  title = {A Set of Postulates for Plane Geometry, Based on Scale and Protractor},
  journal = {Annals of Mathematics},
  volume = {33},
  number = {2},
  pages = {329--345},
  year = {1932},
  publisher = {JSTOR},
  doi = {10.2307/1968336},
  url = {https://doi.org/10.2307/1968336},
  accessdate = {2025-02-15}
}

@incollection{blaszczyk2024,
  author    = {B{\l}aszczyk, P.},
  title     = {Descartes’ Transformation of Greek Notion of Proportionality},
  booktitle = {Handbook of the History and Philosophy of Mathematical Practice},
  publisher = {Springer},
  address   = {Cham},
  year      = {2024},
  pages     = {235--267}
}

@article{hilbert1900,
  author  = {Hilbert, D.},
  title   = {{Über den Zahlbegriff}},
  journal = {Jahresbericht der Deutschen Mathematisch-Vereinigung},
  volume  = {8},
  pages   = {180--184},
  year    = {1900}
}

@incollection{blaszczyk2023,
  author    = {B{\l}aszczyk, P. and Petiurenko, A.},
  title     = {Euler’s Series for Sine and Cosine: An Interpretation in Nonstandard Analysis},
  booktitle = {Research in History and Philosophy of Mathematics},
  publisher = {Birkhäuser},
  year      = {2023},
  pages     = {73--102}
}

@article{blaszczyk2021,
  author  = {B{\l}aszczyk, P. and Petiurenko, A.},
  title   = {Commentary to Book I of the Elements. Hartshorne and beyond},
  journal = {AUPC},
  volume  = {345},
  year    = {2021},
  pages   = {43--99}
}

@misc{GCLC,
  author       = {Predrag Janičić},
  title        = {GCLC -- A tool for visualizing geometric constructions},
  url          = {http://poincare.matf.bg.ac.rs/~janicic/gclc/},
  note         = {Accessed: 2025-02-26}
}

@book{greenberg2008,
  author    = {Greenberg, M.},
  title     = {Euclidean and Non-Euclidean Geometries},
  publisher = {Freeman},
  address   = {New York},
  year      = {2008}
}

@article{blasczyk2021Galileo,
  author  = {B{\l}aszczyk, P.},
  title   = {Galileo's paradox and numerosities},
  journal = {Zagadnienia Filozoficzne w Nauce},
  volume  = {70},
  year    = {2021},
  pages   = {73--107}
}

@book{goldblatt1998,
  author    = {Goldblatt, R.},
  title     = {Lectures on the Hyperreals},
  publisher = {Springer},
  address   = {New York},
  year      = {1998}
}

@book{fitz2008,
  title={Euclid's Elements of Geometry. The Greek text of J.L. Heiberg (1883–1885) from \textit{Euclidis Elementa}, edited and provided with a modern English translation, by Richard Fitzpatrick},
  author={Fitzpatrick, R.},
  url={https://farside.ph.utexas.edu/Books/Euclid/Elements.pdf},
  year={2008},
  publisher={Independently Published}
}

@book{pasch1882,
  author    = {Pasch, M.},
  title     = {Vorlesungen uber neuere Geometrie},
  publisher = {Teubner},
  address   = {Leipzig},
  year      = {1882}
}

@book{descartes1637,
  author    = {Descartes, R.},
  title     = {Discours de la Méthode pour bien conduire sa raison, et chercher la vérité dans les sciences, plus la Dioptrique, les Météores et la Géométrie qui sont des essais de cette Méthode},
  year      = {1637},
  publisher = {Jan Maire},
  address   = {Lejda}
}

@book{IM,
  author    = {Mueller, I.},
  title     = {Philosophy of Mathematics and Deductive Structure in Euclid's \textit{Elements}},
  publisher = {Dover},
  address   = {New York},
  year      = {2006}
}

\end{document}